\documentclass[12pt, a4paper]{amsart}
\usepackage{amssymb}
\usepackage{tikz} 
\usepackage{mathabx}
\usepackage{graphicx}
\usepackage{mathrsfs}
\usepackage{hyperref}
\usepackage{import}
\usepackage{tcolorbox}

\addtolength{\textwidth}{3 truecm}
\addtolength{\textheight}{1 truecm}
\setlength{\voffset}{-.6 truecm}
\setlength{\hoffset}{-1.3 truecm}

\theoremstyle{plain}

\newtheorem{theorem}{Theorem}[section]
\newtheorem{lemma}[theorem]{Lemma}
\newtheorem{proposition}[theorem]{Proposition}
\newtheorem{corollary}[theorem]{Corollary}

\theoremstyle{definition}
\newtheorem{define}{Definition}[section]

\theoremstyle{remark}
\newtheorem{remark}{Remark}[section]

\newtheorem*{acknowledgement}{Acknowledgement}

\begin{document}

\date\today

\title[Exhaustion of hyperbolic complex manifolds]{Exhaustion of hyperbolic complex manifolds and relations to the squeezing function}
\author{Ninh Van Thu, Trinh Huy Vu and Nguyen Quang Dieu\textit{$^{1,2}$}}

\address{Ninh Van Thu}
\address{~School of Applied Mathematics and Informatics, Hanoi University of Science and Technology, No. 1 Dai Co Viet, Hai Ba Trung, Hanoi, Vietnam}
\email{thu.ninhvan@hust.edu.vn}

\address{Trinh Huy Vu}
\address{~Department of Mathematics, Vietnam
National University at Hanoi, 334 Nguyen Trai, Thanh Xuan, Hanoi, Vietnam}
\email{trinhhuyvu1508@gmail.com}

\address{Nguyen Quang Dieu}
\address{\textit{$^{1}$}~Department of Mathematics, Hanoi National University of Education, 136 Xuan Thuy, Cau Giay, Hanoi, Vietnam}
 \address{\textit{$^{2}$}~Thang Long Institute of Mathematics and Applied Sciences,
Nghiem Xuan Yem, Hoang Mai, Hanoi, Vietnam}
\email{ngquang.dieu@hnue.edu.vn}

\subjclass[2020]{Primary 32H02; Secondary 32M05, 32F18.}
\keywords{Hyperbolic complex manifold, exhausting sequence, $h$-extendible domain}

\begin{abstract}
The purpose of this article is twofold. The first aim is to characterize an $n$-dimensional Kobayashi hyperbolic complex manifold $M$ exhausted by a sequence $\{\Omega_j\}$ of domains in $\mathbb C^n$ via an exhausting sequence $\{f_j\colon \Omega_j\to M\}$ such that $f_j^{-1}(a)$ converges to a boundary point $\xi_0 \in \partial \Omega$ for some point $a\in M$. Then, our second aim is to show that any spherically extreme boundary point must be strongly pseudoconvex. 
\end{abstract}

\maketitle

\section{introduction}
Let $M$ be an $n$-dimensional Kobayashi hyperbolic complex manifold. Let $\{M_j\}$ and $\{\Omega_j\}$ be two sequences of open subsets in $M$ and $\mathbb C^n$ respectively. Suppose that $M$ \emph{can be exhausted by} $\{\Omega_j\}$ via an exhausting sequence $\{f_j\colon \Omega_j \to M_j\subset M\}$ in the sense that $f_j$ is a biholomorphism from $\Omega_j$ onto $M_j$ for all $j\geq 1$ and $\displaystyle\cup_{j=1}^\infty M_j=M$. Then it is a natural problem is to describe $M$ in terms of $\{\Omega_j\}$.  In the case that $\Omega_j=\Omega$ for all $j\geq 1$, this problem is called the \emph{union problem} (cf. \cite{BBMV21, FSi81}). In $1977$, J. E. Forn{\ae}ss and L. Stout \cite{FS77} proved that if $\Omega$ is the unit ball $\mathbb B^n:=\{z\in \mathbb C^n\colon |z|<1\}$, then $M$ is biholomorphically equivalent to $\mathbb B^n$. More generally, $M$ is biholomorphically equivalent to $\Omega$ if it is a homogeneous bounded domain in $\mathbb C^n$. For further results about the union problem the reader may also consult the references \cite{FSi81, FM95, Liu18, NT21, BBMV21}.

Now let us fix a point $a\in M$ and assume that $\lim \Omega_j=\Omega$ (see Definition \ref{Def1} for the notion of this limit). Then, we consider the behavior of  the sequence $\{f_j^{-1}(a)\}\subset \Omega$. In the case when $\{f_j^{-1}(a)\}$ converges to a point $p\in \Omega$, $M$ is biholomorphically equivalent to $\Omega$ (cf.  Corollary \ref{key-prop} in Section \ref{S2}). Therefore, we especially pay attention to the case that $\{f_j^{-1}(a)\}$ converges to a boundary point $\xi_0\in \partial\Omega$.

In the first part of this paper, we give a characterization of our manifold $M$ in term of the behavior of the orbit $\{f_j^{-1}(a)\}$. To do this, let us fix positive integers $m_1,\ldots, m_{n-1}$ and let $P(z')$ be a $(1/m_1,\ldots,1/m_{n-1})$-homogeneous polynomial given by 
\begin{equation*}\label{series expression}
P(z')=\sum_{wt(K)=wt(L)=1/2} a_{KL} {z'}^K  \bar{z}'^L,
\end{equation*}
where $a_{KL}\in \mathbb C$ with $a_{KL}=\bar{a}_{LK}$, satisfying that $P(z')>0$ whenever $z'\ne 0$. Here and in what follows, $z':=(z_1,\ldots,z_{n-1})$ and $ wt(K):=\sum_{j=1}^{n-1} \frac{k_j}{2m_j}$ denotes the weight of any multi-index $K=(k_1,\ldots,k_{n-1})\in \mathbb N^{n-1}$ with respect to $\Lambda:=(1/m_1,\ldots,1/m_{n-1})$. Then the general ellipsoid $D_P$ in $\mathbb C^{n}\;(n\geq 2)$, defined in  \cite{NNTK19} by
\begin{equation*}
\begin{split}
D_P &:=\{(z',z_n)\in \mathbb C^{n}\colon |z_n|^2+P(z')<1\}.
\end{split}
\end{equation*}
We emphasize here that 
\begin{equation}\label{eq01}
P(a^{1/m_1} z_1, a^{1/m_2}z_2,\ldots, a^{1/m_{n-1}}z_{n-1}) =P(z'),\; \forall z'\in \mathbb C^{n-1}, \forall a\in \mathbb C\setminus\{0\}.
\end{equation}
Hence, the automorphism group of $D_P$ (denoted by $\mathrm{Aut}(D_P)$) contains the automorphisms $\phi_{a,\theta}$, defined by
\begin{equation*}
(z',z_n)\mapsto  \left( \frac{(1-|a|^2)^{1/2m_1}}{(1-\bar{a}z_n)^{1/m_1}} z_1,\ldots,  \frac{(1-|a|^2)^{1/2m_{n-1}}}{(1-\bar{a}z_n)^{1/m_{n-1}}} z_{n-1}, e^{i\theta} \frac{z_n-a}{1-\bar{a}z_n}\right),
\end{equation*}
where $a\in\Delta:=\{z\in \mathbb C\colon |z|<1\}$ and $\theta\in \mathbb R$ (see Lemma \ref{dilation-property} in Section \ref{S2}).

Throughout this paper, we assume that the domain $D_P$ is a WB-domain, i.e.,  $D_P$ is strongly pseudoconvex at every boundary point outside the set $\{(0',e^{i\theta})\colon \theta\in \mathbb R\}$ (cf. \cite{AGK16}).

For any $s, r\in (0,1]$,  inspired by \cite[Lemma $2.5$]{Liu18}  let us define $D^s_P, D^{s,r}_P$,  and $D_{P,r}$ respectively by
\begin{equation*}
	\begin{split}
D^s_P&:=\{z\in \mathbb C^n \colon |z_n-b|^2+sP(z')<s^2\};\\
D^{s,r}_P&:=\{z\in \mathbb C^n \colon |z_n-b|^2+\dfrac{s}{r}P(z')<s^2\};\\
D_{P,r}:=D_{P/r}&=\Big\{z\in \mathbb C^{n}\colon |z_n|^2+ \dfrac{1}{r} P(z')<1\Big\},
\end{split}
\end{equation*}
where $s=1-b$. We note that $D_P^{s,r}\subset D_P^s\subset D_P$ (see Remark \ref{re-3.1}), $D_P^{s,1}=D_P^s$, and the property that $\lim \psi_j^{-1}(D^s_P)=D_P$ for a certain family $\{\psi_j\}\subset \mathrm{Aut}(D_P)$ (cf. Lemma \ref{exhaustion-ellipsoid} in Section \ref{S4}) plays a key role in the proofs of our main theorems below. 

In what follows, let us denote by $\Delta$ the unit disc in $\mathbb C$ and for a sequence $\{a_j\}\subset \Delta$ converging to $1\in\partial \Delta$ we always denote by $x_j:=1-\mathrm{Re}(a_j)$ and $y_j:=\mathrm{Im}(a_j)$ for $j\geq 1$. To state our first result, we need the following definition (see Section \ref{S4} for its application).
\begin{define}\label{def1} We say that the sequence $\{a_j\}\subset \Delta$ converges \emph{parabolically nontangentially} to $1$ in $\Delta$ if $\displaystyle \lim_{j\to\infty} \frac{y_j^2}{x_j}=0$.
\end{define}

Our first main result is the following theorem.
\begin{theorem}\label{maintheorem2} Let $M$ be an $n$-dimensional Kobayashi hyperbolic complex manifold and let $\Omega$ be a pseudoconvex domain in $\mathbb{C}^{n}$.  Let $\{\Omega_j\}$ be a sequence of subdomains of $D_P$ such that $D^s_P\subset \Omega_j\subset D_P, j\geq 1$. Suppose also that $M$ can be exhausted by $\{\Omega_j\}$ via an exhausting sequence $\{f_j\colon D_P\supset \Omega_j \to M_j\subset M\}$. If there exist a point $a \in M$ and a number $r\in (0,1)$ such that the sequence $D^{s,r}_P\ni \eta_j =(\eta_j,\eta_{jn}) := f_j^{-1}(a)$ converges to $(0',1)$ in $D_P$ and $\{\eta_{jn}\}\subset \Delta$ converges parabolically nontangentially to $1\in \partial \Delta$, then $M$ is biholomorphically equivalent to $D_P$.
\end{theorem}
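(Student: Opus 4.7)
The plan is to reduce to the interior-convergence regime of Corollary~\ref{key-prop} by pre-composing each $f_j$ with an automorphism $\Psi_j\in\mathrm{Aut}(D_P)$ that simultaneously drags the boundary orbit $\{\eta_j\}$ back to an interior point of $D_P$ and inflates the inner ellipsoid $D_P^s$ out to exhaust $D_P$. The natural candidate is the explicit automorphism $\Psi_j:=\phi_{\eta_{jn},0}$ from the introduction; once both properties are verified, the Corollary closes the argument immediately.

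For the first property, a direct computation with $\phi_{a,0}$ gives $\Psi_j(\eta_j)=(\zeta_j,0)$ where $\zeta_{j,k}=\eta_{j,k}/(1-|\eta_{jn}|^2)^{1/(2m_k)}$, and the weighted homogeneity of $P$ yields
\[
P(\zeta_j)=\frac{P(\eta_j')}{1-|\eta_{jn}|^2}.
\]
Writing $\eta_{jn}=1-x_j+iy_j$ and using $b=1-s$, the containment $\eta_j\in D_P^{s,r}$ gives $P(\eta_j')<(r/s)(2sx_j-x_j^2-y_j^2)$ and $1-|\eta_{jn}|^2=2x_j-x_j^2-y_j^2$. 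Dividing and letting $x_j\to 0$ with $y_j^2/x_j\to 0$ forces $\limsup_j P(\zeta_j)\leq r<1$, so after passing to a subsequence $\Psi_j(\eta_j)\to(\zeta_\infty,0)$, an interior point of $D_P$. This is the first place where the parabolic nontangential hypothesis is used decisively: without control on the ratio $y_j^2/x_j$ one loses the asymptotic balance between the numerator and denominator above.

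For the second property, Lemma~\ref{exhaustion-ellipsoid} supplies a family of automorphisms whose action pushes $D_P^s$ out to an exhaustion of $D_P$; one checks that the concrete family $\Psi_j=\phi_{\eta_{jn},0}$ (possibly after inversion and up to an $S^1$-factor $e^{i\theta_j}$ that does not disturb the previous step) is of the required form, hence $\Psi_j(D_P^s)\to D_P$ in the sense of Definition~\ref{Def1}. Combined with $D_P^s\subset\Omega_j\subset D_P$ and $\Psi_j\in\mathrm{Aut}(D_P)$, this upgrades to $\Psi_j(\Omega_j)\to D_P$. Setting $g_j:=f_j\circ\Psi_j^{-1}:\Psi_j(\Omega_j)\to M_j$ then realizes $M=\bigcup_j M_j$ as an exhaustion via biholomorphisms of the sequence $\{\Psi_j(\Omega_j)\}$ with limit $D_P$, and the basepoints $g_j^{-1}(a)=\Psi_j(\eta_j)\to(\zeta_\infty,0)$ converge to an interior point. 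Corollary~\ref{key-prop} now yields $M\cong D_P$.

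The main technical obstacle I anticipate is the compatibility check between the concrete $\Psi_j=\phi_{\eta_{jn},0}$ and the family in Lemma~\ref{exhaustion-ellipsoid}: establishing the full set-theoretic convergence $\Psi_j(D_P^s)\to D_P$ (both directions of Definition~\ref{Def1}, not merely the trivial inclusion into $D_P$) as $\eta_{jn}\to 1$ parabolically nontangentially. This amounts to a delicate asymptotic estimate on the images of $\partial D_P^s$ under the scaling $\Psi_j$, and is where one expects the parabolic \emph{rate}, rather than a generic nontangential approach, to be indispensable. Should this verification require an additional rotation $e^{i\theta_j}$ or a mild adjustment of $\eta_{jn}$ along a parabolic curve, the preceding steps are unaffected, so the strategy is robust.
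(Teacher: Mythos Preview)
Your proposal is correct and follows exactly the paper's proof: the paper's $\psi_j^{-1}$ is precisely your $\Psi_j=\phi_{\eta_{jn},0}$, Lemma~\ref{exhaustion-ellipsoid} is invoked for $\Psi_j(D_P^s)\to D_P$ (and its analogue $\Psi_j(D_P^{s,r})\to D_{P,r}$, which encodes your direct estimate $\limsup_j P(\zeta_j)\le r$), and Corollary~\ref{key-prop} closes the argument. No inversion or extra $S^1$-factor is needed, so the hedging in your final paragraph can be dropped.
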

\begin{remark} Notice that the point $p=(0',1)$ is $(P,s)$-extreme for each domain $\Omega_j$ (cf. \cite[Definition $1.1$]{NNC21} for the notion of $(P,s)$-extreme points) and the convergence of a sequence of points in $D_P^{s,r}$ to $p$, with its $n$-th component converging nontangentially to $1\in\partial \Delta$, is exactly the $\Lambda$-nontangential convergence introduced in \cite[Definition $3.4$]{NN19}. Therefore, in this situation \cite[Theorem $1.1$]{NT21} yields Theorem \ref{maintheorem2} for the case that $\Omega_j=D_P$ for all $j\geq 1$. However, our proof here is quite different  and more simple. In particular, we do not need to assume the convergence of $\Omega_j$ to $D_P$ and the proof shows that $D_P$ may not be a WB-domain.
\end{remark}
\begin{remark} The parabolically nontangential convergence requirement appears necessary, because without this condition our manifold $M$ may not be biholomorphically equivalent to $D_P$ (cf. Remark \ref{remark2023-1}).

\end{remark}

Now we move to the definition of $\Lambda$-tangential convergence.
\begin{define} \label{def2} We say that $\{\eta_j\}\subset D_P\cap U$ converges \emph{$\Lambda$-tangentially} to $p=(0',1)$ if for any $0<r<1$ there exists $j_{r}\in \mathbb N$ such that $\eta_{j}\not \in D_{P,r}$ for all $j\geq j_{r}$.
\end{define}

For the case that $\{\eta_j\}\subset \Omega\cap U$ converges $\Lambda$-tangentially to $p=(0',1)$, we do not know whether $M$ is biholomorphically equivalent to $D_P$. However, the following theorem shows that $M$ is biholomorphically equivalent to the unit ball $\mathbb{B}^{n}$ provided  that all $\partial\Omega_j$ share a small neighborhood of the point $(0',1)$ with $\partial D_P$ to which the sequence of points converges $\Lambda$-tangentially. More precisely, our second main result is the following theorem.
\begin{theorem}\label{maintheorem10}Let $M$ be an $n$-dimensional Kobayashi hyperbolic complex manifold.  Let $\{\Omega_j\}$ be a sequence of subdomains of $D_P$ such that $\Omega_j\cap U=D_P\cap U$, $j\geq 1$, for a fixed neighborhood $U$ of $(0',1)$ in $\mathbb C^n$. Suppose also that $M$ can be exhausted by $\{\Omega_j \}$ via an exhausting sequence $\{f_j\colon D_P\supset \Omega_j \to M_j\subset M\}$. If there exists a point $a \in M$ such that the sequence $\eta_j:= f_j^{-1}(a)$ converges $\Lambda$-tangentially to $(0',1)$ in $D_P$, then $M$ is biholomorphically equivalent to the unit ball $\mathbb{B}^{n}$.
\end{theorem}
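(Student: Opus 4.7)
The strategy is to reduce to a strongly pseudoconvex accumulation point by means of the automorphism $\psi_j:=\phi_{\eta_{jn},0}\in\mathrm{Aut}(D_P)$ of $D_P$, and then to obtain the biholomorphism $M\cong\mathbb{B}^n$ by a Pinchuk-type anisotropic rescaling, finishing via Corollary \ref{key-prop}.

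First I observe that $\sigma_j:=\psi_j(\eta_j)=(\sigma_j',0)$ where $\sigma_{ji}=\eta_{ji}/(1-|\eta_{jn}|^2)^{1/2m_i}$, and the weighted-homogeneity of $P$ (cf.\ (\ref{eq01})) gives
\[
P(\sigma_j')=\frac{P(\eta_j')}{1-|\eta_{jn}|^2}.
\]
The $\Lambda$-tangential hypothesis says $\eta_j\notin D_{P,r}$ eventually for every $r<1$, i.e.\ $P(\eta_j')/(1-|\eta_{jn}|^2)\to 1$, so $P(\sigma_j')\to 1$. Passing to a subsequence, $\sigma_j\to\sigma_\infty=(\sigma_\infty',0)\in\partial D_P$ with $\sigma_\infty'\neq 0'$; in particular $\sigma_\infty$ is not of the form $(0',e^{i\theta})$, so by the WB-domain assumption $\sigma_\infty$ is a strongly pseudoconvex boundary point of $D_P$. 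The compositions $g_j:=f_j\circ\psi_j^{-1}\colon\psi_j(\Omega_j)\to M_j$ then form a biholomorphism-exhaustion of $M$ with $g_j^{-1}(a)=\sigma_j\to\sigma_\infty$, and since $\Omega_j\cap U=D_P\cap U$ and $\eta_j\in U$ eventually, $\psi_j(\Omega_j)$ contains $\psi_j(D_P\cap U)$, a genuine neighborhood of $\sigma_j$ in $D_P$.

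At the strongly pseudoconvex point $\sigma_\infty$ I now perform the classical Pinchuk anisotropic scaling: affine maps $T_j$ consisting of a translation sending the boundary foot of $\sigma_j$ to the origin, a unitary rotation straightening the inner normal to $\partial D_P$, and a dilation by $1/\sqrt{\delta_j}$ in the complex-tangential directions and $1/\delta_j$ in the normal direction, where $\delta_j:=\mathrm{dist}(\sigma_j,\partial D_P)$. Standard local-Hausdorff estimates give $T_j(D_P\cap W)\to H$, where $W$ is a small neighborhood of $\sigma_\infty$ and $H=\{w\in\mathbb{C}^n:2\,\mathrm{Re}(w_n)<-|w'|^2\}\cong\mathbb{B}^n$ is the Siegel half-space, and $T_j(\sigma_j)\to q_\infty\in H$. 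Setting $\Psi_j:=T_j\circ\psi_j$, $\widetilde\Omega_j:=\Psi_j(\Omega_j)$ and $\widetilde f_j:=f_j\circ\Psi_j^{-1}\colon\widetilde\Omega_j\to M_j$, I obtain a new exhausting sequence with $\widetilde f_j^{-1}(a)=T_j(\sigma_j)\to q_\infty$, an interior point of $H$. Provided $\lim\widetilde\Omega_j=H$ in the sense of Definition \ref{Def1}, Corollary \ref{key-prop} immediately yields $M\cong H\cong\mathbb{B}^n$.

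The main technical obstacle is exactly this last verification, namely $\lim\widetilde\Omega_j=H$. One must show that the double anisotropic blow-up $\Psi_j=T_j\circ\psi_j$, applied to the fixed neighborhood $U\cap D_P$ of $(0',1)$ on which $\Omega_j$ agrees with $D_P$, dilates out to exhaust the model $H$. The expansive factors of $\psi_j$ at $\eta_j$ (namely $(1-|\eta_{jn}|^2)^{-1/2m_i}$ in the $z_i$ directions and $(1-|\eta_{jn}|^2)^{-1}$ in $z_n$) must be shown to combine compatibly with the strongly pseudoconvex Pinchuk scaling $T_j$ at $\sigma_\infty$; this is a weighted-distance computation in the spirit of Lemma \ref{exhaustion-ellipsoid}, and I expect it to be the main technical work of the proof.
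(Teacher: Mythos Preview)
Your first step---applying the automorphism $\psi_j=\phi_{\eta_{jn},0}$ of $D_P$ to move $\eta_j$ into the slice $\{z_n=0\}$, and using the $\Lambda$-tangential hypothesis to show $P(\sigma_j')\to 1$ so that (after a subsequence) $\sigma_j\to\sigma_\infty\in\partial D_P\cap\{z_n=0\}$, a strongly pseudoconvex point by the WB-assumption---is exactly what the paper does.

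Where you diverge is in handling the domain convergence, which you correctly flag as the remaining obstacle. You propose to compose $\psi_j$ with an explicit Pinchuk dilation $T_j$ and verify $\lim T_j\circ\psi_j(\Omega_j)=H$ directly; the paper bypasses this double-rescaling analysis entirely. The key tool you are missing is the \emph{attraction property} at finite-type boundary points (\cite[Proposition~2.1]{Ber94} or \cite[Proposition~2.2]{DN09}): since $\psi_j^{-1}(0',0)=(0',\eta_{jn})\to(0',1)$ and $(0',1)$ is of finite D'Angelo type, one gets immediately
\[
\lim_{j\to\infty}\psi_j(\Omega_j)=\lim_{j\to\infty}\psi_j(\Omega_j\cap U)=\lim_{j\to\infty}\psi_j(D_P\cap U)=D_P
\]
in your notation. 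Once this is in hand, the new exhausting sequence $g_j=f_j\circ\psi_j^{-1}\colon\psi_j(\Omega_j)\to M_j$ has $\lim\psi_j(\Omega_j)=D_P$ and $g_j^{-1}(a)=\sigma_j\to\sigma_\infty$ strongly pseudoconvex; the paper then simply invokes Theorem~\ref{togetmodelstronglypsc1} (the strongly pseudoconvex case, packaged from \cite{NT21}) to conclude $M\cong\mathbb{B}^n$. So the weighted-distance computation you anticipate ``in the spirit of Lemma~\ref{exhaustion-ellipsoid}'' is unnecessary: a single citation dissolves your main technical obstacle, and the Pinchuk rescaling is absorbed into the black-box Theorem~\ref{togetmodelstronglypsc1} rather than redone by hand.
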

\begin{remark}
Theorem \ref{maintheorem2} and Theorem \ref{maintheorem10} may be summarized roughly in the following dichotomy: if the convergence of $f_j^{-1} (a)$ to $p$ is $\Lambda$-nontangential (see page 2) then $M$ is biholomorphically equivalent to $D_P$, otherwise if the convergence is $\Lambda$-tangential then $M$ is biholomorphically equivalent to the ball $\mathbb B^n$.
\end{remark}
\begin{remark}
In order to prove Theorem \ref{maintheorem10}, we shall show without loss of generality that the sequence $\{\psi_j^{-1}(\eta_j)\}$ converges nontangentially to some boundary point $p\in \partial D_P\cap \{z_n=0\}$. Thanks to the fact that $D_P$ is a WB-domain, that point is strongly pseudoconvex and hence the proof easily follows from Theorem \ref{togetmodelstronglypsc1}. However, if $D_P$ is not a WB-domain, i.e. the point $p$ may not be strongly pseudoconvex, but it is $h$-extendible in the sense of J. Yu (cf. \cite{Yu95}), then $M$ would be biholomorphically equivalent to some model by Theorem $1.1$ in \cite{NT21}.      
\end{remark}

Let $\Omega$ be a pseudoconvex domain in $\mathbb C^n$. Suppose that $\xi_0\in \partial \Omega$ is a boundary orbit accumulation point, i.e. there exists a sequence $\{\varphi_j\}\subset \mathrm{Aut}(\Omega)$ such that $\eta_j:=\varphi_j(a)$ converges to $\xi_0\in \partial\Omega$ for some point $a\in \Omega$. Here and in what follows, $\mathrm{Aut}(\Omega)$ denotes the automorphism group of $\Omega$. Then, one notices that our domain $\Omega$ is exhausted by itself via the sequence $\varphi_j\colon \Omega\to\Omega$. In addition, as an application of Theorem \ref{maintheorem2} and Theorem \ref{maintheorem10}, we show that if $\Omega$ is a subdomain of $D_P$ with $\Omega\cap U=D_P\cap U$ for a fixed neighborhood $U$ of $(0',1)$ in $\mathbb C^n$, then $\Omega$ must be biholomorphically equivalent to either $D_P$ or $\mathbb B^n$ (cf. Corollary \ref{corollary-model} in Section \ref{S4}).

Now we move to the second part of this paper. Let $\Omega$ be a bounded domain in $\mathbb C^n$ and $q \in \Omega$. For a holomorphic embedding $f\colon \Omega \to \mathbb B^n:=\mathbb B(0;1)$ with $f(q)=0$, one sets
$$
\sigma_{\Omega,f}(q):=\sup\left \{r>0\colon B(0;r)\subset f(\Omega)\right\},
$$
where $\mathbb B^n (z;r)\subset\mathbb{C}^n$  denotes the ball of radius $r$ with center at $z$. Then the \textit{squeezing function} $\sigma_{\Omega}: \Omega\to\mathbb R$ is defined in \cite{DGF12} as
$$
\sigma_{\Omega}(q):=\sup_{f} \left\{\sigma_{\Omega,f}(q)\right\}.
$$
Note that $0 < \sigma_{\Omega}(z)\leq 1$ for any $z \in \Omega$ and it is obvious that the squeezing function is invariant under biholomorphisms.

Now we recall the definition of spherically extreme boundary points (cf. \cite{KZ16}). Indeed, a boundary point $p\in \partial \Omega$ is said to be locally spherically extreme if there exist a neighborhood $U$ of $p$ and a ball $\mathbb B(c(p);R)$ in $\mathbb C^n$ of some radius $R$, center at some point $c(p)$ such that $\partial \Omega\cap U$ is $\mathcal{C}^2$-smooth, $\Omega\cap U\subset \mathbb B(c(p);R)$, and $p\in \partial \Omega\cap \partial\mathbb B(c(p);R)$.

\begin{theorem}\label{togetmodelstronglypsc} Let $M$ be an $n$-dimensional Kobayashi hyperbolic complex manifold and let $\Omega$ be a pseudoconvex domain in $\mathbb{C}^{n}$. Suppose that $\partial\Omega$ admits a spherically extreme boundary point $\xi_0$ in a neighborhood of which the boundary $\partial\Omega$ is $\mathcal{C}^2$-smooth. In addition, let $\{\Omega_j\}$ be a subdomains of $\Omega$  such that $\Omega_j\cap U=\Omega\cap U$, $j\geq 1$, for some neighborhood $U$ of $\xi_0$ in $\mathbb C^{n}$. Suppose also that $M$ can be exhausted by $\{\Omega_j \}$ via an exhausting sequence $\{f_j: \Omega\supset \Omega_j \to M_j\subset M\}$. If there exists a point $a \in M$ such that the sequence $\eta_j := f_j^{-1}(a)$ converges to $\xi_0$, then $M$ is biholomorphically equivalent to the unit ball $\mathbb{B}^{n}$.
\end{theorem}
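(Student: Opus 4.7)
The plan is to exploit the fact that a spherically extreme boundary point with $\mathcal{C}^{2}$ boundary is automatically strongly pseudoconvex, and then to run a Pinchuk-type rescaling to identify the limit model with $\mathbb{B}^{n}$. To verify that $\xi_{0}$ is strongly pseudoconvex, let $B=\mathbb{B}(c(\xi_{0});R)$ be the supporting ball from the definition of spherically extreme, put $\tilde{\rho}(z):=|z-c(\xi_{0})|^{2}-R^{2}$, and let $\rho$ be a $\mathcal{C}^{2}$ defining function for $\Omega$ near $\xi_{0}$. The inclusion $\Omega\cap U\subset B$ forces $\rho\geq 0$ on $\partial B\cap U$, so after rescaling $\rho$ to arrange $\nabla\rho(\xi_{0})=\nabla\tilde{\rho}(\xi_{0})$, the difference $\rho-\tilde{\rho}$ vanishes to first order at $\xi_{0}$ and is nonnegative on $\partial B$ near $\xi_{0}$. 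Taking the complex Hessian, restricting to the common complex tangent space $T^{1,0}_{\xi_{0}}\partial\Omega=T^{1,0}_{\xi_{0}}\partial B$, and using the strong convexity of the sphere gives
\[
L_{\rho}(\xi_{0};v)\;\geq\; L_{\tilde{\rho}}(\xi_{0};v)\;>\;0
\]
for every nonzero $v\in T^{1,0}_{\xi_{0}}\partial\Omega$, so $\xi_{0}$ is a strongly pseudoconvex boundary point of $\Omega$.

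Next, I would set up a Pinchuk-type rescaling. Choose a local biholomorphism $\Phi$ with $\Phi(\xi_{0})=0$ that straightens $\partial\Omega$ near $\xi_{0}$ into the form $\{2\,\mathrm{Re}\,w_{n}+|w'|^{2}+o(|w|^{2})<0\}$. For each $j$, let $\eta_{j}^{\ast}\in\partial\Omega$ realize $\delta_{j}:=\mathrm{dist}(\eta_{j},\partial\Omega)$, and, after translating $\Phi(\eta_{j}^{\ast})$ to the origin, consider the anisotropic dilation
\[
\Lambda_{j}(w):=\left(\frac{w'}{\sqrt{\delta_{j}}},\;\frac{w_{n}}{\delta_{j}}\right).
\]
A standard computation shows that $\Lambda_{j}\circ\Phi(\Omega\cap U)$ converges, in the local Hausdorff sense on $\mathbb{C}^{n}$, to the Siegel half-space $\mathbb{H}^{n}:=\{w\in\mathbb{C}^{n}\colon 2\,\mathrm{Re}\,w_{n}+|w'|^{2}<0\}\cong\mathbb{B}^{n}$, and that $\Lambda_{j}(\Phi(\eta_{j}))\to q_{0}:=(0',-1)\in\mathbb{H}^{n}$. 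The hypothesis $\Omega_{j}\cap U=\Omega\cap U$ is used crucially here: it ensures that $\Lambda_{j}\circ\Phi(\Omega_{j}\cap U)$ has the same local limit, while the portion of $\Omega_{j}$ outside $U$ is pushed off to infinity by $\Lambda_{j}\circ\Phi$.

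Finally, I would pass to a biholomorphism. The rescaled maps $F_{j}:=f_{j}\circ\Phi^{-1}\circ\Lambda_{j}^{-1}$ are biholomorphisms from $\Lambda_{j}\circ\Phi(\Omega_{j}\cap U)$ into $M$ satisfying $F_{j}(\Lambda_{j}(\Phi(\eta_{j})))=a$; Kobayashi hyperbolicity of $M$ together with the fixed-value normalization at a single point of $\mathbb{H}^{n}$ forces $\{F_{j}\}$ to be a normal family, yielding a holomorphic subsequential limit $F\colon\mathbb{H}^{n}\to M$ with $F(q_{0})=a$. The inverses $F_{j}^{-1}=\Lambda_{j}\circ\Phi\circ f_{j}^{-1}$, defined on $M_{j}$, similarly subconverge on compacta of $M$ (using $\bigcup_{j}M_{j}=M$ and the hyperbolicity of $\mathbb{H}^{n}$) to a holomorphic $G\colon M\to\mathbb{H}^{n}$, and the identities $F_{j}^{-1}\circ F_{j}=\mathrm{id}$ and $F_{j}\circ F_{j}^{-1}=\mathrm{id}$ pass to the limit to produce the desired biholomorphism $M\cong\mathbb{H}^{n}\cong\mathbb{B}^{n}$. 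The main obstacle I anticipate is the second normal-family step: since $F_{j}^{-1}$ is only defined on $M_{j}$, extracting a diagonal subsequence that converges on every compact subset of $M$ while preserving the inverse identities requires careful bookkeeping, analogous to the argument employed in the proof of Theorem \ref{maintheorem10}.
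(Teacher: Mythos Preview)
Your proposal is correct and follows exactly the paper's two-step route: show that a spherically extreme $\mathcal{C}^{2}$ boundary point is strongly pseudoconvex (the paper's Proposition~\ref{spherical-strongly-psc}), then apply the strongly pseudoconvex exhaustion result (Theorem~\ref{togetmodelstronglypsc1}); the paper's proof of Theorem~\ref{togetmodelstronglypsc} is literally one line invoking these two facts. The only differences are in packaging. For the first step, the paper works in real graph coordinates and obtains the stronger conclusion of strong \emph{convexity} via a contradiction argument with the real Hessian, whereas your Levi-form comparison with the supporting sphere goes directly to strong pseudoconvexity, which is all that is needed here. For the second step, the paper cites Theorem~\ref{togetmodelstronglypsc1} (whose proof is declared a minor modification of \cite[Theorem~1.2]{NT21}) as a black box, while you unfold the Pinchuk scaling.

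One point to tighten in your sketch: your $F_j^{-1}=\Lambda_j\circ\Phi\circ f_j^{-1}$ is defined only on $f_j(\Omega_j\cap U)$, not on all of $M_j$ as you write, so the diagonal extraction you anticipate really requires the attraction property (cf.\ \cite[Proposition~2.1]{Ber94} or \cite[Proposition~2.2]{DN09}) guaranteeing that for every compact $K\subset M$ one eventually has $f_j^{-1}(K)\subset U$. That is precisely the ingredient hidden inside Theorem~\ref{togetmodelstronglypsc1} and used explicitly in the proof of Theorem~\ref{maintheorem10}.
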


By using the scaling method, K.-T. Kim and L. Zhang \cite[Theorem $3.1$]{KZ16} proved that if a domain in $\mathbb C^n$ admits a locally spherically extreme boundary point $p$, then 
$$      
\lim\limits_{\Omega\cap U\ni q\to p}\sigma_{\Omega\cap U}(q)=1, 
$$ 
where $U$ is a small neighborhood of $p$. Of course, we may not have that $\lim\limits_{\Omega\ni q\to p}\sigma_{\Omega}(q)=1$ (see \cite[Theorem $1$]{FN21}). It is known that every strongly convex boundary point is locally spherically extreme. Conversely, we will show in the last section 
(Proposition \ref{spherical-strongly-psc}) that every locally spherically extreme point is actually strongly pseudoconvex.

As an application of Proposition \ref{spherical-strongly-psc}, in the union problem, the Kobayashi hyperbolic complex manifold $M$ must be biholomorphically equivalent to $\mathbb B^n$ provided that $\{f_j^{-1}(a)\}$ converges to a spherically extreme boundary point $\xi_0\in \partial\Omega$ (see Theorem \ref{togetmodelstronglypsc1} and the proof of Theorem \ref{togetmodelstronglypsc} in Sections \ref{S4} and \ref{S5} respectively for more details). More generally, if $\lim\limits_{z\to \xi_0}\sigma_\Omega(z)=1$, then $M$ is also biholomorphically equivalent to $\mathbb B^n$ (cf. Corollary \ref{hhr1}).   

The organization of this paper is as follows: In Section~\ref{S2} we provide some results concerning the normality of  a sequence of biholomorphisms. Next, we give our proofs of Theorem \ref{maintheorem2} and Theorem \ref{maintheorem10} in Section \ref{S4}. Finally, the proof of Theorem \ref{togetmodelstronglypsc} will be introduced in Section \ref{S5}.

\section{The normality}\label{S2}
First of all, we recall the following definition (see \cite{GK} or \cite{DN09}). 
\begin{define}\label{Def1} Let $\{\Omega_i\}_{i=1}^\infty$ be a sequence of open sets in a complex manifold $M$ and $\Omega_0 $ be an open set of $M$. The sequence $\{\Omega_i\}_{i=1}^\infty$ is said to converge to $\Omega_0 $ (written $\lim\Omega_i=\Omega_0$) if and only if 
        \begin{enumerate}
                \item[(i)] For any compact set $K\subset \Omega_0,$ there is an $i_0=i_0(K)$ such that $i\geq i_0$ implies that $K\subset \Omega_i$; and 
                \item[(ii)] If $K$ is a compact set which is contained in $\Omega_i$ for all sufficiently large $i,$ then  $K\subset \Omega_0$.
        \end{enumerate}  
\end{define}

Next, we recall the following proposition, which is a generalization of the theorem of H. Cartan (see \cite{DN09, GK, TM}).
\begin{proposition} \label{T:7}  Let $\{A_i\}_{i=1}^\infty$ and $\{\Omega_i\}_{i=1}^\infty$ be sequences of domains in complex manifolds $M$ and $N$ respectively with $\dim M=\dim N$, $\lim A_i=A_0$, and $\lim \Omega_i=\Omega_0$ for some (uniquely determined) domains $A_0$ in $M$ and $\Omega_0$ in $N$. Suppose that $\{f_i: A_i \to \Omega_i\} $ is a sequence of biholomorphic maps. Suppose also that the sequence $\{f_i: A_i\to N \}$ converges uniformly on compact subsets of $ A_0$ to a holomorphic map $F:A_0\to N $ and the sequence $\{g_i:=f^{-1}_i: \Omega_i\to M \}$ converges uniformly on compact subsets of $\Omega_0$ to a holomorphic map $G:\Omega_0\to M$.  Then, one of the following assertions holds: 
        \begin{enumerate}
                \item[(i)] The sequence $\{f_i\}$ is compactly divergent, i.e., for each compact set $K\subset A_0$ and each compact set $L\subset \Omega_0$, there exists an integer $i_0$ such that $f_i(K)\cap L=\emptyset$ for $i\geq i_0$; or
                \item[(ii)] There exists a subsequence $\{f_{i_j}\}\subset \{f_i\}$  such that the sequence $\{f_{i_j}\}$ converges uniformly on compact subsets of $A_0$ to a biholomorphic map $F: A_0 \to \Omega_0$.
        \end{enumerate}
\end{proposition}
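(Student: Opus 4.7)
The plan is to show that if $\{f_i\}$ is not compactly divergent then the given limit $F$ is automatically a biholomorphism $A_0\to\Omega_0$, so that alternative (ii) holds (no further subsequence refinement being necessary once an orbit has been pinned down). First, failure of compact divergence produces, after a subsequence, compacta $K\subset A_0$, $L\subset\Omega_0$ and points $x_0\in K$, $y_0\in L$ with $f_i(x_0)\to y_0$; uniform convergence gives $F(x_0)=y_0\in\Omega_0$. Choosing relatively compact neighbourhoods $x_0\in V\Subset A_0$ and $y_0\in W\Subset\Omega_0$ with $F(\overline V)\subset W$, uniform convergence forces $f_i(\overline V)\subset W$ for $i$ large, and passing $g_i\circ f_i=\mathrm{id}_{\overline V}$ to the limit (using $g_i\to G$ on $\overline W$) yields $G\circ F=\mathrm{id}_V$; in particular $\det(dF)(x_0)\neq 0$.

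Next I would globalise this to a local biholomorphism via Hurwitz. In local coordinate charts the Jacobian determinants $\det(df_i)$ are holomorphic and nowhere zero, so the limit $\det(dF)$ is holomorphic on $A_0$, and the several-variable Hurwitz theorem (reduced to one variable by restriction to complex lines) together with the connectedness of $A_0$ forces $\det(dF)$ to be either identically zero or nowhere zero; the previous step excludes the first alternative, so $F\colon A_0\to N$ is a local biholomorphism. The symmetric argument at $y_0$, starting from $f_i\circ g_i=\mathrm{id}$, makes $G\colon \Omega_0\to M$ a local biholomorphism and delivers $G(y_0)=x_0$.

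The main obstacle is the remaining step, $F(A_0)\subset\Omega_0$. Set $A':=F^{-1}(\Omega_0)\subset A_0$, an open set containing $x_0$; on $A'$ the identity $G\circ F=\mathrm{id}$ holds by the identity principle. Suppose for contradiction $A'\neq A_0$, so by connectedness one picks $x^*\in\partial A'\cap A_0$; continuity with $x^*\notin A'$ places $F(x^*)$ in $\partial\Omega_0$. Since $F$ is a local biholomorphism at $x^*$, it maps a neighbourhood $V^*\ni x^*$ biholomorphically onto an open $W^*\ni F(x^*)$, and hence $W^*$ contains an open set $U$ with $\overline U\cap\overline{\Omega_0}=\emptyset$. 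A standard Rouché/argument-principle argument in several complex variables, using $f_i\to F$ uniformly on $\overline{V^*}$ and the biholomorphy of $F|_{V^*}$ onto $W^*$, shows that each compact subset of $W^*$ is contained in $f_i(V^*)\subset\Omega_i$ for $i$ large; applied to $\overline U$, condition (ii) in the definition of $\lim\Omega_i=\Omega_0$ then forces $\overline U\subset\Omega_0$, contradicting $\overline U\cap\overline{\Omega_0}=\emptyset$. Thus $A'=A_0$, and the identity principle upgrades $G\circ F=\mathrm{id}_V$ to $G\circ F=\mathrm{id}_{A_0}$; the symmetric conclusion $F\circ G=\mathrm{id}_{\Omega_0}$ then makes $F$ the biholomorphism required by (ii). The hard part is precisely this last step, in which local boundary information about $F$ must be converted, via Rouché and the definition of $\lim\Omega_i$, into the required contradiction.
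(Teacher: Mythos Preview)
The paper does not supply its own proof of this proposition; it simply remarks that the case $M=N$ was proved in \cite{DN09} and that ``the same proof can give the result'' in the present generality. Your outline is essentially the classical argument (going back to Greene--Krantz and carried out in \cite{DN09, TM}): pin down an interior limit point, use $g_i\circ f_i=\mathrm{id}$ to get $G\circ F=\mathrm{id}$ locally, invoke Hurwitz on the Jacobian, and then push $F(A_0)\subset\Omega_0$ via the definition of $\lim\Omega_i=\Omega_0$. So in spirit your approach matches what the paper defers to.

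There is one genuine slip in your last step. You claim that, since $F(x^*)\in\partial\Omega_0$ and $W^*$ is a neighbourhood of $F(x^*)$, you can choose an open $U\subset W^*$ with $\overline{U}\cap\overline{\Omega_0}=\emptyset$. This need not be possible: nothing rules out $W^*\subset\overline{\Omega_0}$ (think of a boundary point that is not accessible from the exterior). Fortunately the detour is unnecessary. Take instead any open $U$ with $F(x^*)\in U$ and $\overline{U}\subset W^*$. Your Rouch\'e argument gives $\overline{U}\subset f_i(V^*)\subset\Omega_i$ for all large $i$, and then condition (ii) in the definition of $\lim\Omega_i=\Omega_0$ forces $\overline{U}\subset\Omega_0$; in particular $F(x^*)\in\Omega_0$, contradicting $x^*\notin A'=F^{-1}(\Omega_0)$. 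With this correction the argument goes through.

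A minor stylistic point: since by hypothesis the full sequence $\{f_i\}$ already converges to $F$, the initial subsequence extraction is only needed to locate $x_0$ and $y_0$; once you show $F$ is a biholomorphism, alternative (ii) holds with $\{f_{i_j}\}=\{f_i\}$.
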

\begin{remark}
	In \cite{DN09}, Do Duc Thai and the first author proved the above proposition for the case that $M=N$, but the same proof can give the result in the above proposition. 
\end{remark}

Now let us recall the definition of the Kobayashi infinitesimal pseudometric. For a point $p, q$ be two points in a complex manifold $M$ and let $X\in T_pM$, the \emph{Kobayashi infinitesimal pseudometric} $F_M (p, X)$ is defined by 
$$
F_M(p,X)=\inf \{\alpha>0~|~\exists g\in \mathrm{Hol}(\Delta,\Omega), g(0)=p, g'(0)=X/\alpha\}.
$$
The \emph{ Kobayashi pseudodistance} $d^K_M (p, q)$ is defined by 
 $$
 d^K_M  (p, q) = \inf \int_a^b F_M(\gamma (t), \gamma'(t))dt, 
 $$
where the infimum is taken over all differentiable curves $\gamma : [a,b]\to M$ such that $\gamma(a)=p$ and $\gamma(b)=q$. A complex manifold $M$ is called Kobayashi hyperbolic if $d^K_M (p, q)\ne 0$ whenever $p\ne q$.

The following proposition is inspired by Theorem $3.2$ in \cite{FSi81}.
\begin{proposition}\label{key'-prop}
	Let $X, Y$ be two complex manifolds of dimension $n.$
	Let $\{ M_j \}_{j=1}^{\infty}$ be a sequence of domains in $X$ that converges to a domain $M$
	and $\{\Omega_j\}$ be a sequence of domains in $Y$ that converges to a taut domain $\Omega$.
	Let $\{f_j: M_j \to \Omega_j\}$ be a normal sequence of biholomorphisms.
	Suppose that there exists a point $a \in M$ satisfying the following conditions:
\begin{itemize}
	\item[(i)] $\lim\limits_{j \to \infty} f_j(a)=b \in \Omega$; 	
	\item[(ii)] $\varlimsup\limits_{j \to \infty} F_{M_j} (a, \xi)>0, \ \forall \xi \ne 0$.
\end{itemize}
	Then, $\{f_j\}$ contains a subsequence that converges uniformly on compacta to a biholomorphic map $f: M \to \Omega$.
\end{proposition}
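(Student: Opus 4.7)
The plan is to apply Proposition \ref{T:7}: it suffices to show that a subsequence of $\{f_j\}$ converges locally uniformly on $M$ to a holomorphic map $f\colon M\to Y$ with $f(M)\subset \Omega$, and that a subsequence of $\{g_j:=f_j^{-1}\}$ converges locally uniformly on $\Omega$ to a holomorphic map $g\colon \Omega\to X$. Once these are in place, Proposition \ref{T:7} gives either compact divergence of $\{f_j\}$ or a biholomorphism $f\colon M\to \Omega$; compact divergence is ruled out since $f_j(a)\to b\in \Omega$ by (i).

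By normality of $\{f_j\}$, I would first extract a subsequence (not relabelled) converging locally uniformly on $M$ to a holomorphic $f\colon M\to Y$, with $f(a)=b\in \Omega$ by (i). For any compact $K\subset M$, one has $f_j(K)\subset \Omega_j$ for $j$ large, and since $\Omega_j\to \Omega$, $f(K)\subset \overline{\Omega}$; combining this with tautness of $\Omega$ and $f(a)\in \Omega$, a standard argument shows the open set $\{z\in M\colon f(z)\in \Omega\}$ is also closed, hence equals $M$.

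The crucial step is to show $df(a)$ is nondegenerate, and this is where condition (ii) enters. Assume for contradiction that $df(a)\xi=0$ for some $\xi\ne 0$; then $df_j(a)\xi\to 0$. Fix a relatively compact open neighborhood $W$ of $b$ in $\Omega$; since $\Omega_j\to \Omega$, eventually $W\subset \Omega_j$, and contractivity of the Kobayashi infinitesimal pseudometric under inclusion gives $F_{\Omega_j}(w,v)\le F_W(w,v)$ for $w\in W$. Using biholomorphic invariance
\[
F_{M_j}(a,\xi)=F_{\Omega_j}\bigl(f_j(a),df_j(a)\xi\bigr),
\]
and continuity of $F_W$ at $(b,0)$, we deduce $F_{M_j}(a,\xi)\le F_W(f_j(a),df_j(a)\xi)\to F_W(b,0)=0$, contradicting (ii). Since $\dim X=\dim Y=n$, $df(a)$ is a linear isomorphism, so $f$ is a biholomorphism from some open neighborhood $U$ of $a$ onto a neighborhood $V$ of $b$.

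Finally, to build a limit of $\{g_j\}$, uniform convergence $f_j\to f$ on $\overline{U}$ together with invertibility of $df(a)$ yields, via a Hurwitz-type argument, that for $j$ large, $f_j|_U$ is a biholomorphism onto an open set containing a fixed compact neighborhood $\overline{V'}\subset V$ of $b$, with $g_j(\overline{V'})\subset \overline{U}$; thus $\{g_j\}$ is locally uniformly bounded near $b$, and a subsequence converges to $f^{-1}|_{V'}$ on $V'$. To propagate normality of $\{g_j\}$ to all of $\Omega$, I would argue in the spirit of Forn\ae ss--Sibony \cite[Theorem 3.2]{FSi81}: the set $E\subset \Omega$ where $\{g_j\}$ is locally uniformly bounded is open, and tautness of $\Omega$ combined with the Kobayashi-isometric property of $f_j$ and $g_j$ shows that $E$ is also closed. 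Then $E=\Omega$ gives a further subsequence with $g_j\to g$ locally uniformly on $\Omega$, and Proposition \ref{T:7} concludes. The main obstacle is precisely this last step: extending normality of $\{g_j\}$ from a neighborhood of $b$ to all of $\Omega$ requires a careful combined use of tautness of $\Omega$ and invariance of the Kobayashi pseudometric to prevent the images $g_j(K)$ of compact $K\subset \Omega$ from escaping $M$.
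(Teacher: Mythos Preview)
Your overall plan to invoke Proposition~\ref{T:7} is right, and step~3 (nondegeneracy of $df(a)$ from condition~(ii)) matches the paper's argument. But the paper takes a much shorter route afterwards, one that eliminates both the weak point in your step~2 and the ``main obstacle'' you flag in step~5.

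Here is the key idea you are missing. Since each $f_j$ is a biholomorphism, $\det df_j$ is nowhere zero; by Hurwitz's theorem the limit $\det df$ is either identically zero or nowhere zero on $M$, and it is nonzero at $a$. Hence $f$ is open, so $f(M)$ is an open subset of $\overline{\Omega}$ containing $b$, forcing $f(M)\subset\Omega$. This replaces your step~2, whose open/closed argument is not a routine consequence of tautness: tautness constrains sequences of maps \emph{into} $\Omega$, whereas the $f_j$ map into the varying $\Omega_j$, not into $\Omega$ itself, so the taut dichotomy does not apply directly to $\{f_j\circ\phi\}$ for discs $\phi$ in $M$. A second Hurwitz-type lemma (Lemma~\ref{hurwitz}) then shows that $f$, as a locally uniform limit of injective maps that is nonconstant, is itself injective. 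Thus $f\colon M\to f(M)\subset\Omega$ is a global biholomorphism.

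Normality of $\{g_j\}$ on all of $\Omega$ now comes for free: the compositions $f\circ g_j\colon \Omega_j\to f(M_j)\subset\Omega$ take values in the taut domain $\Omega$ and hence form a normal family; since $f$ is a biholomorphism onto its image, this transfers to $\{g_j\}$. Because $g_j(f_j(a))=a$, the family is not compactly divergent, and Proposition~\ref{T:7} concludes. Your propagation argument in step~5 is therefore unnecessary --- and, as you sensed, it faces a real difficulty: nothing in the hypotheses makes $M$ or $X$ complete hyperbolic, so a uniform bound on $d^K_{M_j}(g_j(w_0),g_j(w_1))$ does not by itself confine $g_j(w_0)$ to a compact subset of $X$.
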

\begin{remark}
	The condition $\mathrm{(ii)}$ always holds for the case that $X$ is contained in a Kobayashi hyperbolic domain in $\mathbb C^n$.
\end{remark}

We claim no originality for the following fact which is a slight extension of Hurwitz's theorem.
\begin{lemma} \label{hurwitz}
	Let $X,Y$ be complex manifolds of complex dimension $n$ and
	let $\{f_j\}$ be a sequence of injective holomorphic maps from $X$ to $Y.$  Suppose that $\{f_j\}$ converges locally uniformly to a
	non-constant holomorphic map $f:X \to Y.$ Then, $f$ is also injective on $X$.
\end{lemma}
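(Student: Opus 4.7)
The plan is to follow the classical proof of Hurwitz's theorem, using the determinant of the complex Jacobian of $f_j$ in local charts as the auxiliary holomorphic function, and then promoting local injectivity to a global statement by a covering argument.

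First I would observe that an injective holomorphic map between equidimensional complex manifolds is automatically a local biholomorphism (a classical theorem sometimes attributed to Osgood), so in any pair of local charts the Jacobian determinant of $f_j$ is nowhere vanishing. Fix $x_0\in X$ and choose charts $(U,\phi)$ near $x_0$ and $(V,\psi)$ near $f(x_0)$ with $f_j(U)\subset V$ for all large $j$. Then $\det J(\psi\circ f_j\circ\phi^{-1})$ is a sequence of nowhere-vanishing holomorphic functions on $\phi(U)$ converging locally uniformly to $\det J(\psi\circ f\circ\phi^{-1})$, and the classical one-variable Hurwitz theorem forces this limit to be either identically zero or nowhere zero on $\phi(U)$. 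A chart-patching argument together with the connectedness of $X$ shows that globally $\det Jf$ either vanishes identically on $X$ or nowhere on $X$. The non-constancy hypothesis on $f$ is used to rule out the first alternative, so $f$ itself is a local biholomorphism at every point of $X$.

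To upgrade local to global injectivity, suppose for contradiction that $f(p)=f(q)=:w_0$ with $p\ne q$. Choose disjoint coordinate neighborhoods $U_p$ of $p$ and $U_q$ of $q$ small enough that $f$ restricts to biholomorphisms $f|_{U_p}\colon U_p\to W_p$ and $f|_{U_q}\colon U_q\to W_q$, with $w_0\in W_p\cap W_q$. Setting $g_p:=(f|_{U_p})^{-1}$, the composition $f_j\circ g_p$ is defined on $W_p$ for $j$ large and converges locally uniformly to the identity on $W_p$; a standard Rouch\'e-type argument in several variables then shows that, for $j$ large, $f_j(U_p)$ contains a fixed ball $B$ around $w_0$, and the same holds for $f_j(U_q)$. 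Picking any $w\in B$ yields distinct preimages $x_j^p\in U_p$ and $x_j^q\in U_q$ with $f_j(x_j^p)=f_j(x_j^q)=w$, contradicting the injectivity of $f_j$; hence $f$ must be injective.

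The main delicate point is the use of the non-constancy hypothesis to exclude the case $\det Jf\equiv 0$. In higher dimensions non-constancy alone does not rule out a degenerate Jacobian: for instance the injective maps $(z,w)\mapsto(z,w/j)$ on $\mathbb{C}^2$ converge to the non-constant but degenerate map $(z,w)\mapsto(z,0)$, which is manifestly not injective. So in this lemma the word \emph{non-constant} must be interpreted in the strong sense that $f$ is non-degenerate, i.e.\ that its complex Jacobian does not vanish identically on $X$; under this interpretation (which is consistent with the one-variable case) the argument above is complete.
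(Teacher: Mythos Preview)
Your approach and the paper's are structurally the same: assume $f(p)=f(q)=\lambda$ with $p\ne q$, take disjoint ball-charts $U_p,U_q$, and argue that for large $j$ the value $\lambda$ is attained by $f_j$ in both charts, contradicting injectivity of $f_j$. The paper compresses the last step into the single phrase ``by Hurwitz's theorem there exists $n_0$ such that $\lambda\in f_{n_0}(U)\cap f_{n_0}(V)$'', whereas you unpack it: you first run the one-variable Hurwitz argument on the Jacobian determinants to force $f$ to be a local biholomorphism everywhere, and only then invoke the Rouch\'e-type stability of image to get $\lambda\in f_j(U_p)\cap f_j(U_q)$. That extra step is not a detour; it is precisely what is needed to make the Rouch\'e argument go through, since a value $\lambda$ of $f$ is stable under perturbation only when $p$ is an \emph{isolated} point of $f^{-1}(\lambda)$.

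You are also right to flag the hypothesis. Your example $f_j(z,w)=(z,w/j)\to(z,0)$ on $\mathbb{C}^2$ is a genuine counterexample to the lemma \emph{as literally stated}: the limit is non-constant yet non-injective, and indeed for $q=(0,1)$ no $f_j$ attains $(0,0)$ on a small ball about $q$, so the paper's Hurwitz step fails there. The correct hypothesis is the one you propose (non-degenerate Jacobian, equivalently discrete fibers), and this is exactly what the paper has in hand at the unique place the lemma is invoked: in the proof of Proposition~\ref{key'-prop} the invertibility of $f'$ at every point is established \emph{before} Lemma~\ref{hurwitz} is applied. So the gap is in the lemma's statement rather than in the paper's main argument, and your write-up both diagnoses and repairs it.
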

\begin{proof}
	Suppose that there exist two distinct points $z, w \in X$ such that $f(z)=f(w)=\lambda \in Y$.
	Then we choose disjoint neighbourhoods $U,V$ of $z$ and $w,$ respectively, such that each of them is biholomorphic to the unit ball in $\mathbb C^n.$ By Hurwitz's theorem there exists $n_0$ such that $ \lambda \in f_{n_0} (U) \cap f_{n_0} (V)$. This is a contradiction to injectivity of $f_{n_0}.$
\end{proof}

\begin{proof}[Proof of Proposition \ref{key'-prop}]
	Since  $\{ f_j \}$ is normal and $f_j(a) \to b \in \Omega$ as $j\to \infty$, without loss of generality we may assume that $f_j$ converges uniformly on compacta to a holomorphic map $f\colon M \to \overline{\Omega}$ such that  $f(a)=b$.
	By the invariance property of the infinitesimal Kobayshi metric, we obtain
	$$
	F_{M_j} (a, \xi)= F_{\Omega_j} (f_j (a), f_j' (a) \xi),\; \forall j\geq 1, \,\forall \xi\in T_a^{\mathbb C}(M).
	$$
	Since $\Omega$ is taut, by letting $j \to \infty$ while applying Lemma $3.1$ in \cite{NT21} we obtain for each $\xi \ne 0$ that
	$$
	0<\varlimsup\limits_{j \to \infty} F_{M_j} (a, \xi) = F_{\Omega} (b, f' (a) \xi).
	$$
	It implies that $f'$ is invertible at $a$, and hence at every point of $M$ by Hurwitz's theorem. Therefore, by the inverse function theorem, $f$ is an open map on $M$, and so $f(M) \subset \Omega$.
	By Lemma \ref{hurwitz} $f$ is indeed one to one entirely on $M$.
	
	Finally, because of the biholomorphism from $M$ to $f(M) \subset \Omega$ and the tautness of $\Omega$, it follows that the sequence $f_j^{-1}: \Omega_j \to M_j\subset M$ is normal. Moreover, since $f_j(a) \to b \in \Omega$, it yields the sequences $f_j$ and $f_j^{-1}$ are not compactly divergent. Therefore, by Proposition \ref{T:7}, after passing to a subsequence, $f_j$ converges uniformly on compacta to a biholomorphic map $f: M \to \Omega$.
	
\end{proof}

By Proposition \ref{key'-prop}, we obtain the following corollary, which is a generalization of \cite[Lemma $1.1$]{Fr83}.
\begin{corollary}\label{key-prop}
	Let $\{ M_j \}_{j=1}^{\infty}$ be a sequence of domains in an $n$-dimensional Kobayashi hyperbolic complex manifold $M$ such that $\lim M_j = M$ and $\{\Omega_j\}$ be a sequence of domains in $\mathbb C^n$ converging to  a taut domain $\Omega\subset \mathbb C^n$.  Let $\{f_j: M_j \to \Omega_j\}$ be a normal sequence of biholomorphisms. If there exists a point $a \in M$ such that $f_j(a)$ converges to a point $b \in \Omega$, then $\{f_j\}$ contains a subsequence that converges uniformly on compacta to a biholomorphic map $f: M \to \Omega$.
\end{corollary}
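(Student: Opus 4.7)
The plan is to deduce Corollary \ref{key-prop} as an immediate consequence of Proposition \ref{key'-prop}, by noting that the only non-trivial hypothesis to verify is condition (ii) there, namely that $\varlimsup_{j\to\infty} F_{M_j}(a,\xi) > 0$ for every nonzero tangent vector $\xi \in T_a M$.

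To check this, I would first observe that since $M$ is Kobayashi hyperbolic, the infinitesimal Kobayashi pseudometric $F_M$ is in fact a genuine metric on tangent vectors at $a$; in particular, $F_M(a,\xi) > 0$ for every $\xi \neq 0$. This is the standard fact that Kobayashi hyperbolicity forces positivity of $F_M$ away from the zero section (for example, via the equivalence of $d^K_M$ being a true distance with positivity of the integrated infinitesimal form on a sufficiently small neighborhood, together with the invariance of hyperbolicity under passing to infinitesimal objects).

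Next, since the hypothesis $\lim M_j = M$ implies that $M_j$ is an open subset of $M$ containing $a$ for all sufficiently large $j$, the inclusion $\iota_j \colon M_j \hookrightarrow M$ is holomorphic, so the decreasing property of the Kobayashi infinitesimal pseudometric under holomorphic maps gives
\begin{equation*}
F_{M_j}(a,\xi) \;\geq\; F_M(a, \xi) \;>\; 0
\end{equation*}
for every $\xi \neq 0$ and every sufficiently large $j$. Taking the $\varlimsup$ yields condition (ii) of Proposition \ref{key'-prop}.

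With that condition verified, and all other hypotheses being exactly what is given (convergence of the $M_j$ and $\Omega_j$, tautness of $\Omega$, normality of $\{f_j\}$, and $f_j(a) \to b \in \Omega$), Proposition \ref{key'-prop} applies verbatim and yields a subsequence of $\{f_j\}$ converging uniformly on compacta to a biholomorphism $f\colon M \to \Omega$. There is no substantive obstacle here; the corollary is essentially a specialization of Proposition \ref{key'-prop} to the case where the domain side of the maps lives inside a Kobayashi hyperbolic manifold, which is precisely the situation flagged in the remark immediately following that proposition.
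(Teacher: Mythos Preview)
Your proof is correct and follows the same overall route as the paper: verify condition~(ii) of Proposition~\ref{key'-prop} and then invoke that proposition directly. Your argument is in fact more streamlined. The paper establishes the full convergence $\lim_{j\to\infty} F_{M_j}(a,\xi) = F_M(a,\xi)$, proving both the lower bound (via monotonicity under the inclusion $M_j \hookrightarrow M$, exactly as you do) and an upper bound (by rescaling nearly extremal Kobayashi discs for $M$ so that they land in some $M_{k(j)}$). But as you correctly observe, only the lower bound $F_{M_j}(a,\xi) \geq F_M(a,\xi) > 0$ is needed to check condition~(ii), and this follows at once from monotonicity together with the Kobayashi hyperbolicity of $M$ (Royden's criterion). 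Your version therefore trims away the upper-bound computation, which is not required for the conclusion.
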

\begin{proof}
We first prove that $F_{M_j} (a,\xi) \to F_M (a,\xi)$ for all $a \in M$ and for all and $\xi\in T^{\mathbb C}_a(M)$. Indeed, by the decreasing property of the infinitesimal Kobayashi distance we see that the sequence 
$\{F_{M_j} (a,\xi)\}_{j \ge 1}$ is decreasing and bounded from below by $F_M (a, \xi).$

On the other hand, for each $j \ge 1$, let $f_j: \Delta \to M$ be a holomorphic map with 
$$f_j(0)=a, f_j'(0)= \lambda \xi, \frac{1}{\lambda} \leq F_M (a,\xi)+\frac{1}{j}.$$
Notice that $f_j ((1-\frac{1}{j}) \Delta)$ is relatively compact in $M$ for any $j\geq 1$, and thus it is included in some $M_{k(j)}$. 
By considering the map $\tilde f_j: \Delta \to M_{k(j)}$ defined by
$$\tilde f_j (z):= f_j \Big ((1-\frac{1}{j}) z\Big ), \ z \in \Delta,
$$
we obtain 
$$F_{M_{k(j)}} (a, \xi) \leq \frac{1}{\lambda \Big (1-\frac{1}{j} \Big)}, \; j\geq 1.$$
Therefore, one has that
$$
\Big (1-\frac{1}{j} \Big) F_{M_{k(j)}} (a, \xi) \leq F_M (a,\xi)+\frac{1}{j},  \; j\geq 1.
$$
By letting $j \to \infty$, we get
$$
\lim\limits_{j \to \infty} F_{M_j} (a,\xi) \leq F_{M} (a,\xi).
$$
Hence, $\lim\limits_{j \to \infty} F_{M_j} (a,\xi)=F_{M} (a,\xi)$ for all $a\in M$ and $\xi\in T^{\mathbb C}_a(M)$, as desired.
The proof now follows from Proposition \ref{key'-prop}.
\end{proof}

\section{Proofs of Theorem \ref{maintheorem2} and Theorem \ref{maintheorem10}}\label{S4}
This section is devoted to the proofs of Theorem \ref{maintheorem2} and Theorem \ref{maintheorem10}. 
For this purpose, we first recall the following lemma which is essentially well-known~(cf.~\cite{NNTK19}).
\begin{lemma}[see \cite{NNTK19}]\label{dilation-property} Let $P$ be a weighted homogeneous polynomial with weight $(m_1,\ldots,m_{n-1})$ given by  \eqref{series expression} such that $P(z')> 0$ for all $z'\in \mathbb C^{n-1}\setminus\{0'\}$. Then, $\mathrm{Aut}(D_P)$ contains the following automorphisms $\phi_{a,\theta}$, defined by
\begin{equation}\label{formautomorphism}
(z',z_n)\mapsto  \left( \frac{(1-|a|^2)^{1/2m_1}}{(1-\bar{a}z_n)^{1/m_1}} z_1,\ldots,  \frac{(1-|a|^2)^{1/2m_{n-1}}}{(1-\bar{a}z_n)^{1/m_{n-1}}} z_{n-1}, e^{i\theta} \frac{z_n-a}{1-\bar{a}z_n}\right),
\end{equation}
where $a\in\Delta:=\{z\in \mathbb C\colon |z|<1\}$ and $\theta\in \mathbb R$. 
\end{lemma}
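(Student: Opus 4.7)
The plan is to verify directly that $\phi_{a,\theta}$ preserves the defining function of $D_P$ up to a positive real multiplicative factor, and then to exhibit an inverse of $\phi_{a,\theta}$ inside the same family of maps. For well-definedness and holomorphy, note that $|a|<1$ and $|z_n|\le 1$ on $\overline{D_P}$ force $\mathrm{Re}(1-\bar a z_n)\ge 1-|a|>0$, so $1-\bar a z_n$ stays in the open right half-plane. The principal branches $(1-\bar a z_n)^{1/m_j}$ are therefore holomorphic on a neighbourhood of $\overline{D_P}$, and each coordinate of $\phi_{a,\theta}$ is a well-defined holomorphic function there; the $n$-th coordinate is the standard disc automorphism of $\Delta$.

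The main computation is the following. Set $w=\phi_{a,\theta}(z)$ so that $w_j=c_j\,z_j$ with $c_j:=(1-|a|^2)^{1/(2m_j)}(1-\bar a z_n)^{-1/m_j}$. For any pair of multi-indices $(K,L)$ with $wt(K)=wt(L)=1/2$,
\[
\prod_{j=1}^{n-1} c_j^{k_j}\bar c_j^{l_j} \;=\; \frac{(1-|a|^2)^{wt(K)+wt(L)}}{(1-\bar a z_n)^{2\,wt(K)}(1-a\bar z_n)^{2\,wt(L)}} \;=\; \frac{1-|a|^2}{|1-\bar a z_n|^2},
\]
a factor independent of $(K,L)$. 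Summing the expansion of $P(w')$ against this common factor yields $P(w')=\tfrac{1-|a|^2}{|1-\bar a z_n|^2}P(z')$. Combined with the classical identity $1-|w_n|^2=(1-|a|^2)(1-|z_n|^2)/|1-\bar a z_n|^2$ for disc automorphisms, one obtains
\[
1-|w_n|^2-P(w') \;=\; \frac{1-|a|^2}{|1-\bar a z_n|^2}\bigl(1-|z_n|^2-P(z')\bigr),
\]
and the positive prefactor shows $\phi_{a,\theta}(D_P)\subseteq D_P$ (and $\phi_{a,\theta}(\partial D_P)\subseteq \partial D_P$).

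Finally, to promote the self-map to an automorphism I would verify that $\phi_{a,\theta}^{-1}=\phi_{-e^{i\theta}a,-\theta}$, which lies in the same family. Inverting the M\"obius transformation in the last coordinate gives $z_n=(e^{-i\theta}w_n+a)/(1+e^{-i\theta}\bar a w_n)$, whence $1-\bar a z_n=(1-|a|^2)/(1+e^{-i\theta}\bar a w_n)$. Substituting back into the definition of $c_j$ turns $z_j=w_j/c_j$ into $(1-|a|^2)^{1/(2m_j)}(1+e^{-i\theta}\bar a w_n)^{-1/m_j}w_j$, which is exactly the $j$-th component of $\phi_{-e^{i\theta}a,-\theta}(w)$. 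The only delicate point in the whole argument is tracking the branches of the fractional powers consistently, so that this composition genuinely produces the identity rather than the identity times an $m_j$-th root of unity; working with the principal branches on the right half-plane (where all relevant quantities lie) resolves this without further effort, and the lemma follows.
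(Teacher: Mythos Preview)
The paper does not actually prove this lemma; it merely states it as ``essentially well-known'' and cites \cite{NNTK19}. Your direct verification is correct and complete: the identity $P(w')=\dfrac{1-|a|^2}{|1-\bar a z_n|^2}\,P(z')$ is exactly the consequence of the weight condition $wt(K)=wt(L)=1/2$, and together with the standard M\"obius identity for $1-|w_n|^2$ it gives the transformation law for the defining function. Your inverse computation $\phi_{a,\theta}^{-1}=\phi_{-e^{i\theta}a,-\theta}$ is also correct, and the observation that all quantities $1-\bar a z_n$ and $1+e^{-i\theta}\bar a w_n$ stay in the right half-plane is precisely what is needed to make the manipulation of principal branches rigorous.
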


Next, we need the following lemma which is a generalization of  \cite[Lemma $2.5$]{Liu18}.
\begin{lemma}\label{exhaustion-ellipsoid} Let $\{\psi_j\}\subset \mathrm{Aut}(D_P)$ be a sequence of automorphisms 
	$$      
	\psi_j(z,w)=\left(\frac{(1-|a_j|^2)^{1/2m_1}}{(1-\bar{a}_j z_n)^{1/m_1}} z_1,\ldots,  \frac{(1-|a_j|^2)^{1/2m_{n-1}}}{(1-\bar{a}_jz_n)^{1/m_{n-1}}} z_{n-1}, \dfrac{z_n+a_j}{1+\bar a_j z_n}\right),
	$$
where $\{a_j\}\subset \Delta$ is a given sequence converging parabolically nontangentially to $1\in \partial \Delta$. Then, for any $s\in (0,1)$ we have $\psi_j^{-1}(D_P^s)\to D_P$ as $j\to \infty$. 
\end{lemma}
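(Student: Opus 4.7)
The plan is to rewrite the condition $w\in\psi_j^{-1}(D_P^s)$ as a clean scalar inequality in $w$, and then to show that the parabolic nontangential hypothesis forces this inequality on arbitrary compact subsets of $D_P$ for all large $j$. The starting point is the elementary algebraic identity
\[
s^2 - |z_n - b|^2 - s P(z') \;=\; s\bigl(1 - |z_n|^2 - P(z')\bigr) \;-\; b\,|1 - z_n|^2,
\]
valid for $b = 1-s$ (direct expansion using $s+b=1$). Since $\psi_j$ is an automorphism of $D_P$, two standard transformation rules give
\[
1 - |\psi_j(w)_n|^2 - P(\psi_j(w)') = \frac{1 - |a_j|^2}{|1+\bar a_j w_n|^2}\bigl(1 - |w_n|^2 - P(w')\bigr), \qquad 1 - \psi_j(w)_n = \frac{(1 - a_j) - w_n (1 - \bar a_j)}{1+\bar a_j w_n}.
\]
Writing $c_j := 1 - a_j$ and substituting, the condition $\psi_j(w)\in D_P^s$ becomes equivalent to
\[
s\,(1 - |a_j|^2)\bigl(1 - |w_n|^2 - P(w')\bigr) \;>\; b\,|c_j - w_n\bar c_j|^2. \qquad(\ast)
\]

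Next I would analyze $(\ast)$ under parabolic nontangential convergence. With $a_j = (1-x_j)+iy_j$, one has $|c_j|^2 = x_j^2 + y_j^2$ and $1 - |a_j|^2 = 2x_j - x_j^2 - y_j^2$. Since $x_j\to 0$ and $y_j^2/x_j\to 0$, both $x_j^2$ and $y_j^2$ are $o(x_j)$, so $|c_j|^2 = o(x_j)$ while $1-|a_j|^2 \sim 2x_j$; consequently $(1-|a_j|^2)/|c_j|^2 \to +\infty$. This single asymptotic drives the rest of the proof.

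For clause (i) of Definition~\ref{Def1}, given a compact set $K\Subset D_P$ choose $\eta>0$ with $1 - |w_n|^2 - P(w') \ge \eta$ on $K$; the trivial bound $|c_j - w_n\bar c_j|^2 \le 4|c_j|^2$ holds on $K$ since $|w_n|\le 1$. Thus $(\ast)$ reduces to $(1-|a_j|^2)/|c_j|^2 > 4b/(s\eta)$, which holds for all large $j$ uniformly in $w\in K$, so $K\subset \psi_j^{-1}(D_P^s)$ eventually. Clause (ii) is automatic because $D_P^s \subset D_P$ and $\psi_j\in\mathrm{Aut}(D_P)$ force $\psi_j^{-1}(D_P^s)\subset D_P$ for every $j$. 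The main point to verify carefully is the ratio estimate $(1-|a_j|^2)/|c_j|^2\to\infty$; notably, the hypothesis $y_j^2/x_j\to 0$ is the minimal requirement making this ratio diverge, so the parabolic nontangential assumption is sharp for this argument.
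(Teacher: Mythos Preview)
Your proof is correct and takes a genuinely different, more economical route than the paper. The paper substitutes $\psi_j(z)$ into the defining inequality of $D_P^s$ and, after a page of algebra, rewrites $\psi_j^{-1}(D_P^s)$ as an explicit ellipsoid-type domain with three $j$-dependent coefficients; it then invokes Corollary~\ref{parabol-convergence1} (both parts) to show each coefficient tends to the corresponding coefficient of $D_P$. By contrast, your identity
\[
s^2 - |z_n-b|^2 - sP(z') \;=\; s\bigl(1-|z_n|^2-P(z')\bigr) - b\,|1-z_n|^2
\]
together with the two transformation rules collapses the membership condition to the single inequality $(\ast)$, and then you need only the one asymptotic $(1-|a_j|^2)/|1-a_j|^2\to\infty$ (this is exactly part (ii) of Corollary~\ref{parabol-convergence1}) to verify clause~(i) of Definition~\ref{Def1}; clause~(ii) is indeed automatic from $\psi_j^{-1}(D_P^s)\subset D_P$. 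What you gain is brevity and a transparent explanation of why the parabolic nontangential hypothesis is exactly what is needed. What the paper's longer computation buys is that the same algebra, with Lemma~\ref{parabol-convergence} in place of its corollary, immediately gives the explicit limit domain $D_P^s(\alpha)$ for every $\alpha\in[0,2)$ (Lemma~\ref{non-parabol}); your shortcut, being tailored to the case where the limit is all of $D_P$, does not directly produce that refinement.
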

\begin{remark}\label{re-3.1} We have that $D_P^s\subset D_P$. Indeed, let $z\in D_P^s$ be arbitrary. Then, one sees that
$$
|z_n-1|^2+ 2s\mathrm{Re}(z_n-1)+sP(z')<0,
$$ 
or equivalently
$$
\frac{1}{s}\ |z_n-1|^2+ 2\mathrm{Re}(z_n-1)+P(z')<0.
$$ 
Since $0<s<1$, it follows that 
$$
 |z_n-1|^2+ 2s\mathrm{Re}(z_n-1)+sP(z')\leq \frac{1}{s}\  |z_n-1|^2+ 2\mathrm{Re}(z_n-1)+P(z')<0,
$$ 
which implies that $z\in D_P$.
\end{remark}

\begin{remark} In \cite[Lemma $2.5$]{Liu18}, B. Liu consider the case that $\mathrm{Im}(a_j)=0$ and $P(z')=|z'|^2$, i.e. $D_P$ is the unit ball $\mathbb B^n$ and $\mathcal B_s$ is the ball center at $(0', b)$ with radius $s=1-b$. However, the limit of $\psi_j^{-1}(\mathcal B_s)$ is exactly the ellipsoid $\Big\{|z_n|^2+ \dfrac{1}{1-b}|z'|^2<1\Big\}$, which is strictly smaller than the unit ball $\mathbb B^n$. Of course, according to Lemma \ref{exhaustion-ellipsoid} the limit of $\psi_j^{-1}(D_P^s)$ is  $\mathbb B^n$.	
\end{remark}

Now, in order to prove Lemma \ref{exhaustion-ellipsoid}, one needs the following lemma.
\begin{lemma}\label{parabol-convergence} Let  $\{a_j\}$ be a sequence in $\Delta $ such that $\displaystyle \lim_{j\to\infty} \frac{\left(\mathrm{Im}(a_j)\right)^2}{1-\mathrm{Re}(a_j)}=\alpha \in [0,2)$ and $\displaystyle \lim_{j\to\infty} a_j=1$. Then we have
\begin{itemize}
\item[(i)] $\displaystyle\lim_{j\to\infty} \frac{1-\mathrm{Re}(a_j)}{1-|a_j|^2}=\frac{1}{2-\alpha};$
\item[(iii)] $\displaystyle\lim_{j\to\infty} \frac{(1-\bar a_j)^2}{1-|a_j|^2}=\frac{-\alpha}{2-\alpha}.$
\item[(iii)] $\displaystyle\lim_{j\to\infty} \frac{|1-a_j|^2}{1-|a_j|^2}=\frac{\alpha}{2-\alpha}.$
\end{itemize}
\end{lemma}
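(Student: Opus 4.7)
The plan is a direct computation in terms of the real and imaginary parts of $a_j$. Introduce the standard notation from the paper: $x_j := 1 - \mathrm{Re}(a_j)$ and $y_j := \mathrm{Im}(a_j)$, so that the hypotheses $a_j \to 1$ and $\lim y_j^2/x_j = \alpha$ become $x_j \to 0^+$, $y_j \to 0$, and $y_j^2/x_j \to \alpha$. The key algebraic identity is
\begin{equation*}
1 - |a_j|^2 \;=\; 1 - (1-x_j)^2 - y_j^2 \;=\; 2x_j - x_j^2 - y_j^2,
\end{equation*}
and factoring out $x_j$ gives $1 - |a_j|^2 = x_j\bigl(2 - x_j - y_j^2/x_j\bigr)$, whose bracket tends to $2 - \alpha \neq 0$.

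For \textbf{(i)} I would simply write
\begin{equation*}
\frac{1-\mathrm{Re}(a_j)}{1-|a_j|^2} \;=\; \frac{x_j}{x_j\bigl(2 - x_j - y_j^2/x_j\bigr)} \;\longrightarrow\; \frac{1}{2-\alpha}.
\end{equation*}
For \textbf{(iii)}, note $|1-a_j|^2 = x_j^2 + y_j^2$, so dividing numerator and denominator by $x_j$ yields
\begin{equation*}
\frac{|1-a_j|^2}{1-|a_j|^2} \;=\; \frac{x_j + y_j^2/x_j}{2 - x_j - y_j^2/x_j} \;\longrightarrow\; \frac{\alpha}{2-\alpha}.
\end{equation*}

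For \textbf{(ii)}, I would expand $1 - \bar a_j = x_j + i y_j$ to obtain $(1-\bar a_j)^2 = (x_j^2 - y_j^2) + 2i\,x_j y_j$, then divide by $x_j$ in numerator and denominator:
\begin{equation*}
\frac{(1-\bar a_j)^2}{1-|a_j|^2} \;=\; \frac{(x_j - y_j^2/x_j) + 2i\,y_j}{2 - x_j - y_j^2/x_j}.
\end{equation*}
Since $x_j \to 0$, $y_j \to 0$, and $y_j^2/x_j \to \alpha$, the right-hand side tends to $(-\alpha + 0)/(2 - \alpha) = -\alpha/(2-\alpha)$.

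There is no real obstacle here; the only thing to verify is that the denominator $2 - x_j - y_j^2/x_j$ stays away from zero for large $j$, which is immediate from $\alpha < 2$. Note also that formula (ii) in the statement is labelled \texttt{(iii)} — this is just a typographical issue in the statement, and the three limits should be treated in the natural order given by the formulas.
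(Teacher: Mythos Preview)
Your proof is correct and follows exactly the same approach as the paper: write $x_j=1-\mathrm{Re}(a_j)$, $y_j=\mathrm{Im}(a_j)$, use the identity $1-|a_j|^2=2x_j-x_j^2-y_j^2$, divide through by $x_j$, and pass to the limit using $x_j\to 0$, $y_j\to 0$, $y_j^2/x_j\to\alpha$. The only addition you make is the explicit remark that $2-x_j-y_j^2/x_j\to 2-\alpha\neq 0$, which the paper leaves implicit.
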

\begin{proof} We have $x_j\to 0^+$, $y_j\to 0$, and $y_j^2/x_j\to \alpha$ as $j\to \infty$, where $x_j:=1-\mathrm{Re}(a_j), y_j:=\mathrm{Im}(a_j)$ as mentioned in the introduction. Moreover, a computation shows that
\begin{align*}
\frac{1-\mathrm{Re}(a_j)}{1-|a_j|^2}&=\frac{x_j}{1-(1-x_j)^2-y_j^2}=\frac{x_j}{2x_j-x_j^2-y_j^2}=\frac{1}{2-x_j-y_j^2/x_j};\\
\frac{(1-\bar a_j)^2}{1-|a_j|^2}&=\frac{(x_j+iy_j)^2}{1-(1-x_j)^2-y_j^2}=\frac{x_j^2-y_j^2+2i x_j y_j}{2x_j-x_j^2-y_j^2}=\frac{x_j-y_j^2/x_j+2iy_j}{2-x_j-y_j^2/x_j};\\
 \frac{|1-a_j|^2}{1-|a_j|^2}&=\frac{x_j^2+y_j^2}{1-(1-x_j)^2-y_j^2}=\frac{x_j^2+y_j^2}{2x_j-x_j^2-y_j^2}=\frac{x_j+y_j^2/x_j}{2-x_j-y_j^2/x_j},\;\forall j\geq 1.
\end{align*}
Therefore, the assertions follow since $x_j\to 0^+$ and $y_j^2/x_j\to \alpha$ as $j\to\infty$.
\end{proof}
\begin{remark} Let $\{a_j\}\subset \Delta$ be a sequence converging to $1\in \partial \Delta$, i.e. $x_j\to 1^+, y_j\to 0$ as $j\to \infty$. Moreover, $\displaystyle \frac{y_j^2}{x_j}< 2-x_j$ as $a_j \in \Delta$ for every $ j\geq 1$. Therefore, if the limit of $\displaystyle \lim_{j\to\infty} \frac{y_j^2}{x_j}$ exists, then its limit, say $\alpha$, is in $[0,2]$. One sees that our sequence $\{a_j\}$ converges ``very tangentially" to $1$ in $\Delta$ for the case $\alpha=2$.
\end{remark}

We note that Definition \ref{def1} deals with the case that $\alpha=0$. In this situation, by Lemma \ref{parabol-convergence} we obtain the following corollary.
\begin{corollary}\label{parabol-convergence1} Let  $\{a_j\}\subset \Delta$ be a sequence converging parabolically nontangentially to $1\in \partial \Delta$. Then we have
\begin{itemize}
\item[(i)] $\displaystyle\lim_{j\to\infty} \frac{1-\mathrm{Re}(a_j)}{1-|a_j|^2}=\frac{1}{2};$
\item[(ii)] $\displaystyle\lim_{j\to\infty} \frac{|1-a_j|^2}{1-|a_j|^2}=0.$
\end{itemize}
\end{corollary}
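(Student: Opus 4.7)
The plan is to deduce Corollary \ref{parabol-convergence1} as an immediate specialization of Lemma \ref{parabol-convergence} with the parameter $\alpha=0$. Unpacking Definition \ref{def1}, the assumption that $\{a_j\}$ converges parabolically nontangentially to $1\in\partial\Delta$ means precisely that $a_j\to 1$ in $\Delta$ and $\lim_{j\to\infty} y_j^2/x_j = 0$, where (per the convention fixed in the introduction) $x_j=1-\mathrm{Re}(a_j)$ and $y_j=\mathrm{Im}(a_j)$. In other words, in the language of Lemma \ref{parabol-convergence} we are in the case $\alpha=0$, which clearly lies in $[0,2)$, so the hypotheses of that lemma are satisfied and its conclusions are available.

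With $\alpha=0$, item (i) of Lemma \ref{parabol-convergence} specializes to
\[
\lim_{j\to\infty}\frac{1-\mathrm{Re}(a_j)}{1-|a_j|^2}=\frac{1}{2-0}=\frac{1}{2},
\]
which is exactly assertion (i) of the corollary. Likewise, item (iii) of Lemma \ref{parabol-convergence} specializes to
\[
\lim_{j\to\infty}\frac{|1-a_j|^2}{1-|a_j|^2}=\frac{0}{2-0}=0,
\]
which is assertion (ii). There is no real obstacle here; once Lemma \ref{parabol-convergence} has been proved via the explicit algebraic identities involving $x_j$, $y_j$, and $y_j^2/x_j$ that appear in its proof, the corollary is essentially a one-line instantiation at $\alpha=0$, and would only require invoking the observation (noted in the remark preceding the corollary) that Definition \ref{def1} corresponds precisely to this value of $\alpha$.
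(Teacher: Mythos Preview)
Your proposal is correct and matches the paper's approach exactly: the paper introduces the corollary with the sentence ``In this situation, by Lemma \ref{parabol-convergence} we obtain the following corollary,'' after having just observed that Definition \ref{def1} corresponds to the case $\alpha=0$. Your specialization of items (i) and (iii) of Lemma \ref{parabol-convergence} at $\alpha=0$ is precisely what the paper intends.
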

\begin{proof}[Proof of Lemma \ref{exhaustion-ellipsoid}] Let us recall that $b=1-s$ or $s=1-b\in (0,1)$.
	Then using the property (\ref{eq01}), a computation shows that
	\begin{align*}  
		&\hskip 0.5cm\left|\dfrac{z_n+a_j}{1+\bar a_jz_n}-b\right|^2+ s P\left(\frac{(1-|a_j|^2)^{1/2m_1}}{(1-\bar{a}_j z_n)^{1/m_1}} z_1,\ldots,  \frac{(1-|a_j|^2)^{1/2m_{n-1}}}{(1-\bar{a}_jz_n)^{1/m_{n-1}}} z_{n-1} \right) <s^2\\
		&\Leftrightarrow \left|\dfrac{z_n+a_j}{1+\bar a_jz_n}-b\right|^2+ s \dfrac{1-|a_j|^2}{|1+\bar a_jz_n|^2}P(z')    <s^2\\
		&\Leftrightarrow \left|\dfrac{z_n+a_j-b(1+\bar a_j z_n)}{1+\bar a_jz_n}\right|^2+ s\dfrac{1-|a_j|^2}{|1+\bar a_jz_n|^2}P(z')    <s^2\\
		&\Leftrightarrow \left|z_n+a_j-b(1+\bar a_j z_n)\right|^2+ s  (1-|a_j|^2)P(z')    <s^2|1+\bar a_jz_n|^2\\
		&\Leftrightarrow \left|z_n(1-\bar a_j b)+a_j-b\right|^2+ s  (1-|a_j|^2)P(z')    <s^2|1+\bar a_jz_n|^2\\
		&\Leftrightarrow |z_n|^2|1-\bar a_j b|^2+2\mathrm{Re}\left[(\bar a_j-b)(1-\bar a_j b)z_n\right]+|a_j-b|^2+ (1-b)  (1-|a_j|^2)P(z')  \\
		& \hskip0.5cm <s^2\left(|a_j|^2|z_n|^2+2\mathrm{Re}[\bar a_j z_n]+1\right)\\
		&\Leftrightarrow |z_n|^2\left(|1-\bar a_j b|^2-(1-b)^2|a_j|^2\right)+2\mathrm{Re}\left[\left((\bar a_j-b)(1-\bar a_j b)-(1-b)^2 \bar a_j\right)z_n\right]\\
		&\hskip0.5cm+ (1-b)  (1-|a_j|^2)P(z')   <(1-b)^2-|a_j-b|^2\\
		&\Leftrightarrow |z_n|^2+2\mathrm{Re}\left[\dfrac{(\bar a_j-b)(1-\bar a_j b)-(1-b)^2 \bar a_j}{|1-\bar a_j b|^2-(1-b)^2|a_j|^2}z_n\right]\\
		&\hskip0.5cm+ \dfrac{(1-b)  (1-|a_j|^2)}{|1-\bar a_j b|^2-(1-b)^2|a_j|^2}P(z')   <\dfrac{(1-b)^2-|a_j-b|^2}{|1-\bar a_j b|^2-(1-b)^2|a_j|^2}\\
		&\Leftrightarrow \left|z_n+\dfrac{(\bar a_j-b)(1-\bar a_j b)-(1-b)^2 \bar a_j}{|1-\bar a_j b|^2-(1-b)^2|a_j|^2}\right|^2+ \dfrac{(1-b)  (1-|a_j|^2)}{|1-\bar a_j b|^2-(1-b)^2|a_j|^2}P(z')  \\
		& \hskip0.5cm <\dfrac{(1-b)^2-|a_j-b|^2}{|1-\bar a_j b|^2-(1-b)^2|a_j|^2}+\left|\dfrac{(\bar a_j-b)(1-\bar a_j b)-(1-b)^2 \bar a_j}{|1-\bar a_j b|^2-(1-b)^2|a_j|^2}\right|^2.
	\end{align*}    
	Moreover, by a straightforward calculation, one has that

	\begin{align*}
		(\bar a_j-b)(1-\bar a_j b)-(1-b)^2 \bar a_j&=\bar a_j-b-\bar a_j^2 b+\bar a_j b^2-\bar a_j+2\bar a_j b-\bar a_jb^2=-b(1-\bar a_j)^2;\\
		(1-b)^2-|a_j-b|^2&=1-2b+b^2-|a_j|^2+2b\mathrm{Re}(a_j)-b^2\\
		&=1-|a_j|^2-2b(1-\mathrm{Re}(a_j));\\
		|1-\bar a_j b|^2-(1-b)^2|a_j|^2&=1-2\mathrm{Re}(a_j b)+|a_j|^2 b^2-|a_j|^2+2b |a_j|^2-b^2|a_j|^2\\
		&=1-|a_j|^2-2b\left( \mathrm{Re}( a_j)-|a_j|^2\right)\\
		&=1-|a_j|^2-2b\left( \mathrm{Re}( a_j)-1+1-|a_j|^2\right)\\
		&=(1-|a_j|^2)\left[1-2b\Big(1-\frac{1-\mathrm{Re}(a_j)}{1-|a_j|^2}\Big)\right].
	\end{align*}
	Hence, Corollary \ref{parabol-convergence1} implies that
	\begin{align*}
	\lim\limits_{j\to\infty} \dfrac{(\bar a_j-b)(1-\bar a_j b)-(1-b)^2 \bar a_j}{|1-\bar a_j b|^2-(1-b)^2|a_j|^2}&=\lim\limits_{j\to\infty}\dfrac{-b(1-\bar a_j)^2/(1-|a_j|^2)}{1-2b\left[1- (1-\mathrm{Re}( a_j))/(1-|a_j|^2)\right]} =0;\\
	\lim\limits_{j\to\infty} \dfrac{(1-b)  (1-|a_j|^2)}{|1-\bar a_j b|^2-(1-b)^2|a_j|^2}&=\lim\limits_{j\to\infty} \frac{(1-b)  (1-|a_j|^2)}{1-|a_j|^2-2b\left( \mathrm{Re}( a_j)-1+1-|a_j|^2\right)}\\
	&=\lim\limits_{j\to\infty}\dfrac{1-b}{1-2b\left[1- (1-\mathrm{Re}( a_j))/(1-|a_j|^2)\right]} =1;\\
	\lim\limits_{j\to\infty}\dfrac{(1-b)^2-|a_j-b|^2}{|1-\bar a_j b|^2-(1-b)^2|a_j|^2}&=\lim\limits_{j\to\infty} \frac{1-|a_j|^2-2b(1-\mathrm{Re}(a_j))}{1-|a_j|^2-2b\left( \mathrm{Re}( a_j)-1+1-|a_j|^2\right)}\\
	&=\lim\limits_{j\to\infty}\dfrac{1-2b(1-\mathrm{Re}(a_j))/(1-|a_j|^2)}{1-2b\left[1- (1-\mathrm{Re}( a_j))/(1-|a_j|^2)\right]} =1.\\
	\end{align*}
	This yields $\psi_j^{-1}(D_P^s)\to D_P$ as $j\to \infty$.
\end{proof}

We notice that the sequence $\psi_j^{-1}(D_P^s)$ would not converge to $D_P$ as $j\to \infty$ if $\{a_j\}\subset \Delta$ does not converge parabolically nontangentially to $1\in \partial \Delta$. More precisely, we have the following lemma.
\begin{lemma}\label{non-parabol} Let $\{a_j\}\subset \Delta$ be a sequence such that $\displaystyle\lim_{j\to\infty} a_j=1$ and $\displaystyle \lim_{j\to\infty} \frac{y_j^2}{x_j}=\alpha\in [0,2)$. Then, for any $s\in (0,1)$ we have that  $\psi_j^{-1}(D_P^s)$ converges to $D_P^s(\alpha)$, given by
\begin{align*}
\displaystyle D_P^s(\alpha)&=\Big\{z\in \mathbb C^n\colon \left|z_n+\frac{b\alpha}{(1-b)(2-\alpha)+b\alpha}\right|^2+ \frac{(1-b)(2-\alpha)}{(1-b)(2-\alpha)+b\alpha}P(z')  \\
		& \hskip2.5cm <\frac{2-\alpha -2b}{(1-b)(2-\alpha)+b\alpha}+\left|\frac{b\alpha}{(1-b)(2-\alpha)+b\alpha}\right|^2\Big\},	
\end{align*}
where $b=1-s$.
\end{lemma}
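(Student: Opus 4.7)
The overall approach will be to repeat verbatim the algebraic manipulations carried out in the proof of Lemma~\ref{exhaustion-ellipsoid}, but at the final step invoke the full Lemma~\ref{parabol-convergence} (with general parameter $\alpha\in[0,2)$) in place of Corollary~\ref{parabol-convergence1}. Concretely, the first thing I would do is expand the defining inequality for $\psi_j^{-1}(D_P^s)$ using the weighted homogeneity identity \eqref{eq01}. Exactly as in the proof of Lemma~\ref{exhaustion-ellipsoid}, one clears the denominators $|1+\bar a_j z_n|^2$, collects the $|z_n|^2$-coefficient and the mixed term in $z_n$, and completes the square in $z_n$ to rewrite $\psi_j^{-1}(D_P^s)$ as
\[
\left\{z\in\mathbb C^n\colon |z_n+\beta_j|^2+\gamma_j P(z')<\delta_j+|\beta_j|^2\right\},
\]
where
\[
\beta_j:=\frac{(\bar a_j-b)(1-\bar a_j b)-(1-b)^2\bar a_j}{|1-\bar a_j b|^2-(1-b)^2|a_j|^2},\quad
\gamma_j:=\frac{(1-b)(1-|a_j|^2)}{|1-\bar a_j b|^2-(1-b)^2|a_j|^2},\quad
\delta_j:=\frac{(1-b)^2-|a_j-b|^2}{|1-\bar a_j b|^2-(1-b)^2|a_j|^2}.
\]

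Next I would reuse the three identities already established inside the proof of Lemma~\ref{exhaustion-ellipsoid}, namely
\begin{align*}
(\bar a_j-b)(1-\bar a_j b)-(1-b)^2\bar a_j&=-b(1-\bar a_j)^2,\\
(1-b)^2-|a_j-b|^2&=1-|a_j|^2-2b(1-\mathrm{Re}(a_j)),\\
|1-\bar a_j b|^2-(1-b)^2|a_j|^2&=(1-|a_j|^2)\!\left[1-2b\Bigl(1-\frac{1-\mathrm{Re}(a_j)}{1-|a_j|^2}\Bigr)\right].
\end{align*}
After dividing numerator and denominator of $\beta_j,\gamma_j,\delta_j$ by $1-|a_j|^2$, each fraction becomes a quotient in which Lemma~\ref{parabol-convergence}(i) supplies $\frac{1-\mathrm{Re}(a_j)}{1-|a_j|^2}\to \frac{1}{2-\alpha}$ and Lemma~\ref{parabol-convergence}(ii) supplies $\frac{(1-\bar a_j)^2}{1-|a_j|^2}\to \frac{-\alpha}{2-\alpha}$. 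Plugging these in and simplifying the common denominator to $(1-b)(2-\alpha)+b\alpha$ gives
\[
\beta_j\to\frac{b\alpha}{(1-b)(2-\alpha)+b\alpha},\quad
\gamma_j\to\frac{(1-b)(2-\alpha)}{(1-b)(2-\alpha)+b\alpha},\quad
\delta_j\to\frac{2-\alpha-2b}{(1-b)(2-\alpha)+b\alpha},
\]
which is precisely the data appearing in the defining inequality of $D_P^s(\alpha)$.

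Finally I would translate the convergence of coefficients into the convergence $\psi_j^{-1}(D_P^s)\to D_P^s(\alpha)$ in the sense of Definition~\ref{Def1}: any compact $K\subset D_P^s(\alpha)$ sits in the strict sublevel set of the limit polynomial and hence, by uniform continuity of the coefficient dependence, in $\psi_j^{-1}(D_P^s)$ for all large $j$; conversely, any compact set contained in all but finitely many $\psi_j^{-1}(D_P^s)$ satisfies the limiting strict inequality by passage to the limit. I do not foresee a conceptual obstacle; the only delicate point is to check that the denominator $(1-b)(2-\alpha)+b\alpha$ is positive (hence nonzero) throughout the limit, which holds since $b\in(0,1)$ and $\alpha\in[0,2)$ forces $(1-b)(2-\alpha)>0$ and $b\alpha\ge 0$. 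A sanity check against $\alpha=0$ recovers $D_P^s(0)=D_P$, consistent with Lemma~\ref{exhaustion-ellipsoid}.
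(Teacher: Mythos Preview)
Your proposal is correct and follows exactly the paper's own approach: the paper's proof simply says ``By following the proof of Lemma~\ref{exhaustion-ellipsoid} and by Lemma~\ref{parabol-convergence}'' and then records the three limits you computed, so your write-up is in fact a more detailed version of the same argument. The additional care you take with the Definition~\ref{Def1} verification and the positivity of the denominator is fine but not something the paper spells out.
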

\begin{proof} By following the proof of Lemma \ref{exhaustion-ellipsoid} and by Lemma \ref{parabol-convergence}, we have

\begin{align*}
	\lim\limits_{j\to\infty} \dfrac{(\bar a_j-b)(1-\bar a_j b)-(1-b)^2 \bar a_j}{|1-\bar a_j b|^2-(1-b)^2|a_j|^2}& =\frac{b\alpha}{(1-b)(2-\alpha)+b\alpha};\\
	\lim\limits_{j\to\infty} \dfrac{(1-b)  (1-|a_j|^2)}{|1-\bar a_j b|^2-(1-b)^2|a_j|^2}& =\frac{(1-b)(2-\alpha)}{(1-b)(2-\alpha)+b\alpha};\\
	\lim\limits_{j\to\infty}\dfrac{(1-b)^2-|a_j-b|^2}{|1-\bar a_j b|^2-(1-b)^2|a_j|^2}&=\frac{2-\alpha -2b}{(1-b)(2-\alpha)+b\alpha}.
	\end{align*}
	Therefore, this yields $\psi_j^{-1}(D_P^s)\to D_P^s(\alpha)$ as $j\to \infty$, as desired.
\end{proof}

\begin{remark}\label{remark2023-1} Let $\{a_j\}\subset \Delta$ be as in Lemma \ref{non-parabol}. Then, $\psi_j^{-1}(D_P^s)$ converges to $D_P^s(\alpha)$, which strictly smaller than $D_P$ for $\alpha \in (0,2)$, and hence the limit of $\psi_j^{-1}(\Omega_j)$ may not be $D_P$.
\end{remark}

\begin{proof}[Proof of Theorem \ref{maintheorem2}] 
Let $\{\eta_j:=f_j^{-1}(a)\}\subset D^{s,r}_P$ be a sequence converging  to $(0',1)$ for some fixed $r\in (0,1)$. For simplicity, let us denote by $a_j=\eta_{jn}$ for $j\geq 1$. By assumption, we have $\{a_j\}\subset \Delta$ is a sequence converging parabolically nontangentially to $1\in \partial \Delta$.

We now consider the sequence of automorphisms $\{\psi_j\}\subset \mathrm{Aut}(D_P)$, given by
	$$  \displaystyle    
	\psi_j(z)=\left(\frac{(1-|a_j|^2)^{1/2m_1}}{(1-\bar{a}_j z_n)^{1/m_1}} z_1,\ldots,  \frac{(1-|a_j|^2)^{1/2m_{n-1}}}{(1-\bar{a}_jz_n)^{1/m_{n-1}}} z_{n-1}, \dfrac{z_n+a_j}{1+\bar a_j z_n} \right), \; j\geq 1.
$$
Then, Lemma \ref{exhaustion-ellipsoid} yields
	\begin{equation*}
		\begin{split}
			\lim\limits_{j\to\infty}\psi_j^{-1}(D_P^{s,r})= D_{P,r} ; \; \lim\limits_{j\to\infty}\psi_j^{-1}(\Omega_j)= D_P,
		\end{split}
	\end{equation*}
	where $D_{P,r}:=D_{P/r}=\Big\{z\in \mathbb C^{n}\colon |z_n|^2+ \dfrac{1}{r} P(z')<1\Big\}$. Moreover, since $\psi_j^{-1}(\eta_j)=\Big(\dfrac{\eta_{j1}}{\lambda_j^{1/2m_1}},\ldots, \dfrac{\eta_{j(n-1)}}{\lambda_j^{1/2m_{n-1}}} ,0\Big)\in D_{P,r}\cap \{z_n=0\}$, where $\lambda_j=1-|a_j|^2$ for all $j\geq 1$ and $D_{P,r}\cap \{z_n=0\}\Subset D_P\cap \{z_n=0\}$, by passing to a subsequence if necessary, we may assume that $\psi_j^{-1}(\eta_j)$ converges to some point $p\in D_P$ (see Figure \ref{fig1} below). Therefore, by Corollary \ref{key-prop} we conclude that $\psi_j^{-1}\circ f_j^{-1}$ converges uniformly on compacta to a biholomorphic map $F\colon M\to D_P$, and thus the proof is complete.     
\end{proof}

 In order to give a proof of Theorem \ref{maintheorem10}, we need the following theorem whose proof is a minor modification of the that of  \cite[Theorem $1.2$]{NT21}
 \begin{theorem}\label{togetmodelstronglypsc1} Let $M$ be an $n$-dimensional Kobayashi hyperbolic complex manifold and let $\Omega$ be a pseudoconvex domains in $\mathbb{C}^n$. Suppose that $\partial\Omega$ is $\mathcal{C}^2$-smooth boundary near a strongly pseudoconvex boundary point $\xi_0 \in \partial \Omega$. In addition, let $\{\Omega_j\}$ be a sequence of domains in $\mathbb C^n$ such that $\Omega_j \cap U = \Omega \cap U$, $j\geq 1$, for some neighborhood $U$ of $\xi_0$. Suppose also that $M$ can be exhausted by $\{ \Omega_j \}$ via an exhausting sequence $\{f_j: \Omega_j \to M_j\subset M\}$. If there exists a point $a \in M$ such that the sequence $\eta_j := f_j^{-1}(a)$ converges to $\xi_0$, then $M$ is biholomorphically equivalent to the unit ball $\mathbb{B}^n$.
  \end{theorem}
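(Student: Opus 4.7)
The plan is to apply the Pinchuk scaling method at the strongly pseudoconvex point $\xi_0$, pulling back the local geometry of $\Omega$ to the Siegel half-space, and then invoke Corollary~\ref{key-prop} to produce a global biholomorphism $M\cong\mathbb B^n$.

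First I would establish a \emph{localization step}: for every compact $K\subset M$ there is $j_K$ with $f_j^{-1}(K)\subset\Omega\cap U$ for all $j\geq j_K$. Since $M$ is Kobayashi hyperbolic, the biholomorphic invariance of the Kobayashi pseudodistance gives $d^K_{\Omega_j}(\eta_j,f_j^{-1}(z))=d^K_M(a,z)\leq R(K)$ for every $z\in K$. Combined with $\eta_j\to\xi_0$ and the classical localization of the Kobayashi pseudodistance at a $\mathcal{C}^2$-smooth strongly pseudoconvex boundary point (which, by the assumption $\Omega_j\cap U=\Omega\cap U$, is uniform across the sequence $\{\Omega_j\}$), the Kobayashi ball of radius $R(K)$ around $\eta_j$ shrinks into any prescribed neighborhood of $\xi_0$, forcing $f_j^{-1}(K)\subset\Omega\cap U$ eventually.

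Next I would choose coordinates so that $\xi_0=0$ and $\Omega\cap U=\{\rho<0\}$ with $\rho(z)=2\mathrm{Re}(z_n)+H(z',z')+o(|z|^2)$, where $H$ is the positive-definite Levi form at $0$. Let $\pi_j\in\partial\Omega$ be the nearest boundary point to $\eta_j$ and set $\epsilon_j:=|\rho(\eta_j)|$. The standard Pinchuk scaling---translation by $-\pi_j$, a unitary rotation lining up the outward normal with $\partial/\partial(\mathrm{Re}\,z_n)$, a linear change normalising $H$ to $|\cdot|^2$, and then the anisotropic dilation by $\epsilon_j^{-1/2}$ in the tangential directions and $\epsilon_j^{-1}$ in the normal direction---produces affine maps $T_j$ with $T_j(\eta_j)\to(0',-1)$ and $T_j(\Omega\cap U)$ converging, in the sense of Definition~\ref{Def1}, to the Siegel half-space $\mathcal S=\{2\mathrm{Re}(\zeta_n)+|\zeta'|^2<0\}$, which is biholomorphic to $\mathbb B^n$ and hence taut.

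Finally, set $\tilde M_j:=f_j(\Omega\cap U)\subset M_j$; the localization step gives $\lim\tilde M_j=M$. The biholomorphisms $g_j:=T_j\circ f_j^{-1}:\tilde M_j\to T_j(\Omega\cap U)$ satisfy $g_j(a)=T_j(\eta_j)\to(0',-1)\in\mathcal S$ and, since $T_j(\Omega\cap U)\to\mathcal S$ remains in a uniformly bounded region of $\mathbb C^n$, form a normal family by Montel's theorem. Corollary~\ref{key-prop}, applied with $\mathcal S$ as the taut target, then extracts a subsequence converging uniformly on compacta to a biholomorphism $g:M\to\mathcal S\cong\mathbb B^n$. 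The main obstacle is the localization step: it is precisely there that strong pseudoconvexity at $\xi_0$ is used essentially; once it is in place the proof reduces to the classical Pinchuk scaling together with the normality machinery of Section~\ref{S2}.
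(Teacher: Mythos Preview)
Your approach is essentially the one the paper has in mind: the paper does not spell out a proof of this theorem, stating only that it is a minor modification of \cite[Theorem~1.2]{NT21}, and that result is proved precisely by the Pinchuk scaling scheme you describe (Kobayashi localization at a strongly pseudoconvex point, anisotropic rescaling to the Siegel half-space, then the normality machinery of Section~\ref{S2}).

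There is, however, one genuine slip in your last paragraph. The assertion that ``$T_j(\Omega\cap U)\to\mathcal S$ remains in a uniformly bounded region of $\mathbb C^n$'' is false: the anisotropic dilations blow up $\Omega\cap U$, and the limit $\mathcal S$ is unbounded, so Montel's theorem does not apply in the form you invoke it. To obtain normality of $\{g_j\}$ you can either post-compose with the Cayley transform $\mathcal S\to\mathbb B^n$ so that all targets sit inside $\mathbb B^n$, or argue directly that for each compact $K\subset M$ the sets $g_j(K)$ lie in a Kobayashi ball of fixed radius about $T_j(\eta_j)$ in $T_j(\Omega\cap U)$, and such balls stay Euclidean-bounded because the limiting Kobayashi balls of $\mathcal S$ around $(0',-1)$ are relatively compact. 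With this repair Corollary~\ref{key-prop} applies as you intend.
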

  Now we are ready to prove Theorem \ref{maintheorem10}.
\begin{proof}[Proof of Theorem \ref{maintheorem10}]
Suppose that $\eta_j := f_j^{-1}(a)$ converges $\Lambda$-tangentially to $(0',1)$ in $D_P$.  For simplicity, let us denote by $a_j=\eta_{jn}$. Then we consider the sequence of automorphisms $\{\psi_j\}\subset \mathrm{Aut}(D_P)$, given by
$$  \displaystyle    
	\psi_j(z)=\left(\frac{(1-|a_j|^2)^{1/2m_1}}{(1-\bar{a}_j z_n)^{1/m_1}} z_1,\ldots,  \frac{(1-|a_j|^2)^{1/2m_{n-1}}}{(1-\bar{a}_jz_n)^{1/m_{n-1}}} z_{n-1}, \dfrac{z_n+a_j}{1+\bar a_j z_n} \right), \; j\geq 1.
$$
 Since $\psi_j(0',0)=(0',a_j)\to (0',1)$ as $j\to\infty$ and the boundary point $(0',1)$ is of D'Angelo finite type, by \cite[Proposition 2.1]{Ber94} or \cite[Proposition 2.2]{DN09} it follows that
	\begin{equation*}
		\begin{split}
			\lim\limits_{j\to\infty}\psi_j^{-1}(\Omega_j) =\lim\limits_{j\to\infty}\psi_j^{-1}(\Omega_j\cap U)=\lim\limits_{j\to\infty}\psi_j^{-1}(D_P\cap U)= D_P.
		\end{split}
	\end{equation*}
	Let us set $b_j=\psi_j^{-1}(\eta_j)$ for all $j\geq 1$. Then, a straightforward computation shows that 
	$$
	b_j=\psi_j^{-1}(\eta_j)=\Big(\dfrac{\eta_{j1}}{\lambda_j^{1/2m_1}},\ldots, \dfrac{\eta_{j(n-1)}}{\lambda_j^{1/2m_{n-1}}} ,0\Big)\in D_{P}\cap \{z_n=0\}, 
	$$
	where $\lambda_j=1-|a_j|^2$ for all $j\geq 1$.
	
	Since $\{\eta_j\}$ converges $\Lambda$-tangentially to $(0',1)$ in $D_P$, it follows that there exists a sequence $\{r_j\}\subset (0,1)$ with $r_j\to 1$ as $j\to\infty$ such that
	$$
	|\eta_{jn}|^2+\dfrac{1}{r_j}P(\eta_j')\geq 1, \;\forall j\geq 1,
	$$
	or equivalently 
	$$
 P(\eta_j')\geq r_j(1-|a_j|^2), \;\forall j\geq 1.
	$$
	This implies that
	\begin{equation*}
		\begin{split}
				1> P(b_j')&=\dfrac{1}{\lambda_j} P(\eta_j')=\dfrac{1}{1-|a_j|^2} P(\eta_j')\geq r_j		\end{split}
	\end{equation*}
	for all $j\geq 1$. Therefore, we obtain that $P(b_j')\to 1$ as $j\to \infty$, and hence by passing to a subsequence if necessary, we may assume that $\psi_j^{-1}(\eta_j)$ converges to some strongly pseudoconvex  boundary point $p\in \partial D_P\cap\{z_n=0\}$ (see Figure \ref{fig2} below). Thus, by Theorem \ref{togetmodelstronglypsc1} we conclude that $\psi_j^{-1}\circ f_j^{-1}$ converges uniformly on compacta to a biholomorphic map $F\colon M\to \mathbb B^n$, and thus the proof is complete. 
\end{proof}

\begin{figure}[htp]
	\begin{minipage}{0.5\textwidth}
		
        \def\svgwidth{\columnwidth}
        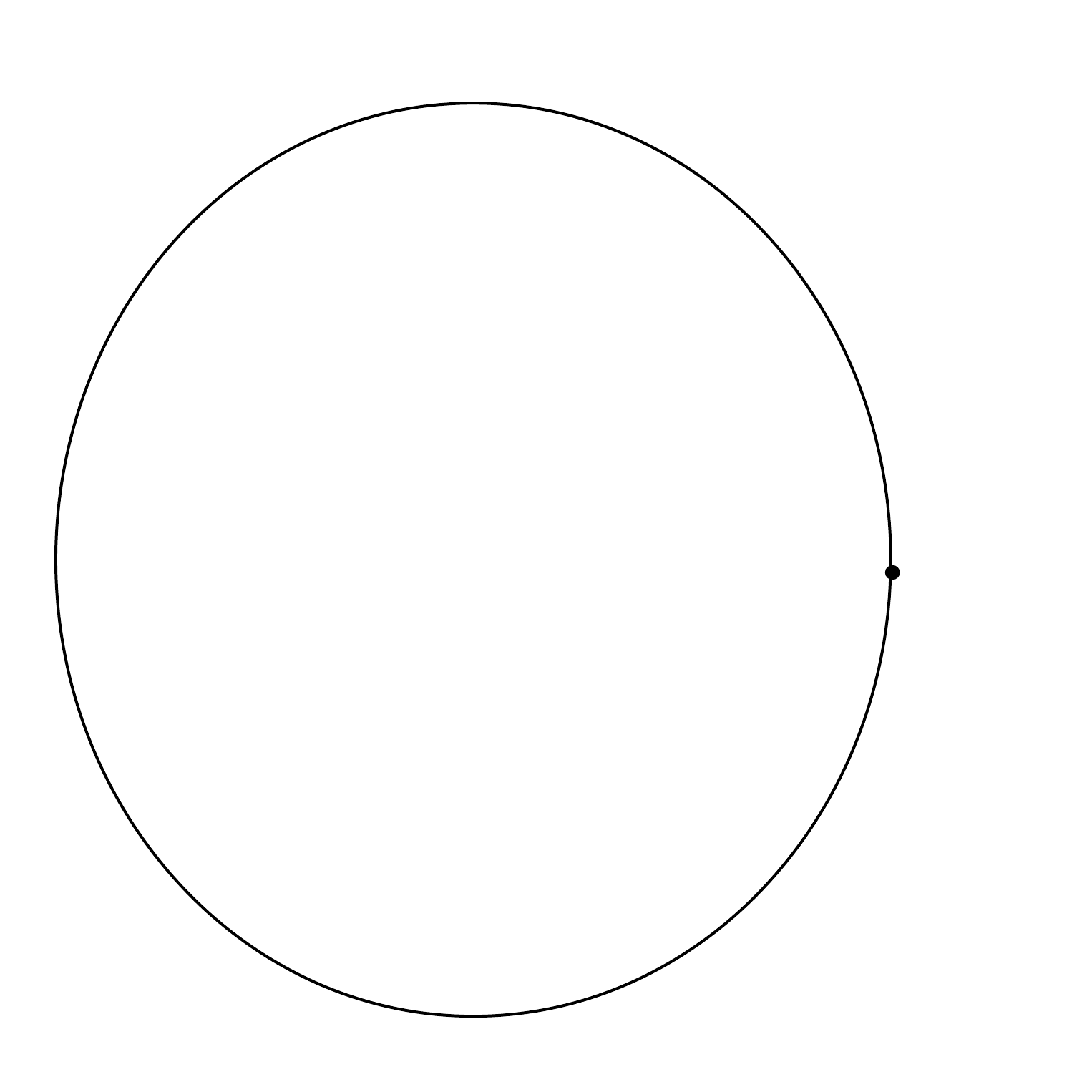

		\caption{$\Lambda$-nontangential convergence}\label{fig1}
	\end{minipage}%
	\begin{minipage}{0.5\textwidth}
		
        \def\svgwidth{\columnwidth}
        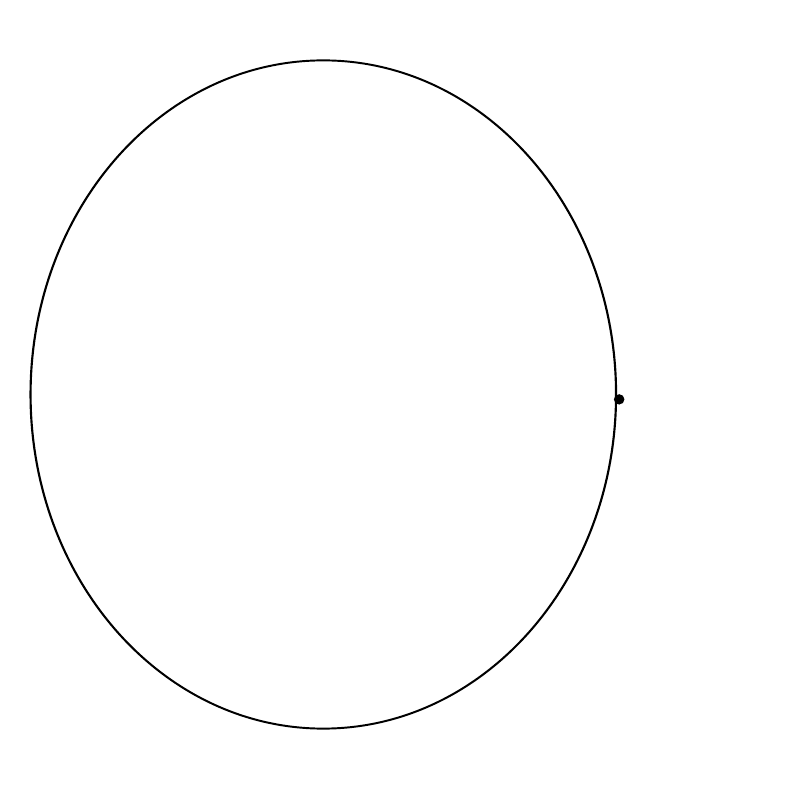

		\caption{$\Lambda$-tangential convergence}\label{fig2}
	\end{minipage}
\end{figure}

We now consider  a pseudoconvex domain $\Omega$ in $\mathbb C^n$ with noncompact automorphism group. Roughly speaking, there exists a sequence $\{\varphi_j\}\subset \mathrm{Aut}(\Omega)$ such that $\eta_j:=\varphi_j(a)$ converges to a boundary point $\xi_0\in \partial\Omega$ for some $a\in \Omega$. In \cite{NN19}, the first and last authors showed that if $\xi_0\in\partial \Omega$ is an $h$-extendible boundary point and  $\eta_j:=\varphi_j(a)$ converges $\Lambda$-nontangentially to $\xi_0$ (cf. \cite[Definition $3.4$]{NN19}), then $\Omega$ is biholomorphically equivalent to the model 
$$
M_{P}:=\left\{z\in \mathbb C^n \colon \mathrm{Re}(z_n)+P(z')<0\right \}.
$$
However, we notice that our domain $\Omega$ is exhausted by $\Omega$ via the sequence $\varphi_j\colon \Omega\to\Omega$ (by \cite[Proposition 2.1]{Ber94} or \cite[Proposition 2.2]{DN09}). Moreover,  in the case that $P(z')>0$ whenever $z'\in \mathbb C^{n-1}\setminus \{0\}$ and $D_P$ is a WB-domain, by Theorem \ref{maintheorem2}, Lemma \ref{non-parabol}, and Theorem \ref{maintheorem10} we have the following corollaries.

\begin{corollary}\label{corollary-model}
	Let $\Omega$ be a subdomain of $D_P$ and $\Omega\cap U=D_P\cap U$ for a fixed neighborhood $U$ of $(0',1)$ in $\mathbb C^n$. Suppose that there exists a sequence $\{\varphi_j\}\subset \mathrm{Aut}(\Omega)$ such that $\eta_j:=\varphi_j(a)$ converges to $(0',1)$ for some $a\in \Omega$. Then, one of the following assertions holds:
	\begin{itemize}
		\item[(i)] $\Omega$ is biholomorphically equivalent to $M_P$;
		\item[(ii)]  $\Omega$ and $D_P$ are biholomorphically equivalent to $\mathbb B^n$.
	\end{itemize}
\end{corollary}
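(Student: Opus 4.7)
The plan is to apply the scaling family $\{\psi_j\}\subset\mathrm{Aut}(D_P)$ from Lemma~\ref{dilation-property} with parameters $a_j:=\eta_{jn}$, together with the normality principle Corollary~\ref{key-prop}, to the trivial exhaustion $\Omega_j=\Omega$, $f_j=\varphi_j$ of $\Omega$ by itself. After passing to a subsequence I may assume
\[
\beta := \lim_{j\to\infty}\frac{P(\eta_j')}{1-|a_j|^2}\in [0,1],
\]
and I will split into the cases $\beta<1$ (leading to (i)) and $\beta=1$ (leading to (ii)).

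For $\beta<1$, a direct computation as in the proof of Theorem~\ref{maintheorem2} yields
\[
\psi_j^{-1}(\eta_j)=\bigl(\eta_{j1}/(1-|a_j|^2)^{1/2m_1},\ldots,\eta_{j(n-1)}/(1-|a_j|^2)^{1/2m_{n-1}},0\bigr),
\]
so $P\bigl((\psi_j^{-1}(\eta_j))'\bigr)=P(\eta_j')/(1-|a_j|^2)\to\beta<1$ by weighted homogeneity. Since $P>0$ away from the origin, the sublevel set $\{P\le R\}$ is bounded for each $R<\infty$, so a further subsequence of $\psi_j^{-1}(\eta_j)$ converges to an interior point $q\in D_P\cap\{z_n=0\}$. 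Meanwhile, the hypothesis $\Omega\cap U=D_P\cap U$ together with the uniform convergence $\psi_j(K)\to(0',1)\in U$ on every compact $K\Subset D_P$ gives $K\subset\psi_j^{-1}(D_P\cap U)=\psi_j^{-1}(\Omega\cap U)\subset\psi_j^{-1}(\Omega)$ for all large $j$, whence $\lim_j\psi_j^{-1}(\Omega)=D_P$. Corollary~\ref{key-prop} applied to $g_j:=\psi_j^{-1}\circ\varphi_j^{-1}\colon\Omega\to\psi_j^{-1}(\Omega)$ (with $g_j(a)\to q$) then produces a biholomorphism $\Omega\to D_P$, and composing with the standard Cayley-type biholomorphism $D_P\cong M_P$ establishes assertion~(i).

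For $\beta=1$, the condition $P(\eta_j')\ge r(1-|a_j|^2)$ eventually for every $r<1$ is precisely $\Lambda$-tangential convergence of $\{\eta_j\}$ to $(0',1)$ in the sense of Definition~\ref{def2}, so Theorem~\ref{maintheorem10} (applied with $\Omega_j=\Omega$, $f_j=\varphi_j$) gives $\Omega\cong\mathbb B^n$. To upgrade this to $D_P\cong\mathbb B^n$, fix a biholomorphism $\Phi\colon\Omega\to\mathbb B^n$ and form $F_j:=\Phi\circ\psi_j\bigr|_{\psi_j^{-1}(\Omega)}\colon\psi_j^{-1}(\Omega)\to\mathbb B^n$, a biholomorphism. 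Fix any $c\in D_P$ (so $c\in\psi_j^{-1}(\Omega)$ for $j$ large by the scaling analysis above) and choose $\alpha_j\in\mathrm{Aut}(\mathbb B^n)$ sending $F_j(c)$ to $0$; the sequence $\alpha_j\circ F_j$ then satisfies the hypotheses of Corollary~\ref{key-prop} with $M_j=\psi_j^{-1}(\Omega)\to D_P$ (which is Kobayashi hyperbolic), $\Omega_j=\mathbb B^n$ taut, and $(\alpha_j\circ F_j)(c)=0$. A subsequence converges to a biholomorphism $D_P\to\mathbb B^n$, proving (ii).

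The main obstacle I anticipate is the rigorous verification of $\lim_j\psi_j^{-1}(\Omega)=D_P$: one inclusion is automatic from $\psi_j^{-1}(\Omega)\subset D_P$, while the other relies only on the local agreement $\Omega\cap U=D_P\cap U$ and the uniform contraction of compacta by $\psi_j$ toward $(0',1)$. Crucially, no parabolic-nontangentiality assumption on $\{a_j\}$ is needed here, which is what permits the clean dichotomy on $\beta$ and allows us to bypass Theorem~\ref{maintheorem2} in favor of a direct appeal to Corollary~\ref{key-prop}. A secondary technical point is extracting a single subsequence along which $\psi_j^{-1}(\eta_j)$ itself (and not merely $P\bigl((\psi_j^{-1}(\eta_j))'\bigr)$) converges; this reduces to the properness of $P$ on $\mathbb{C}^{n-1}$.
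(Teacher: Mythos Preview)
Your argument is correct, aside from a notational slip: since $\eta_j=\varphi_j(a)$, the exhausting maps should be $f_j=\varphi_j^{-1}$ (so that $f_j^{-1}(a)=\eta_j$) and hence $g_j=\psi_j^{-1}\circ\varphi_j$, not $\psi_j^{-1}\circ\varphi_j^{-1}$.

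Your route is genuinely simpler than the paper's. The paper first restricts to $\{\eta_j\}\subset D_P^s$, extracts $\alpha:=\lim y_j^2/x_j\in[0,2)$, and runs four cases: $\alpha=0$ with $\eta_j\in D_P^{s,r}$ invokes Theorem~\ref{maintheorem2} (and through it Lemma~\ref{exhaustion-ellipsoid}); $0<\alpha<2$ invokes Lemma~\ref{non-parabol}; the two $\Lambda$-tangential cases use Theorem~\ref{maintheorem10}; and $D_P\cong\mathbb B^n$ is obtained at the end via a Wong--Rosay argument. Your single dichotomy on $\beta=\lim P(\eta_j')/(1-|a_j|^2)$ collapses the first three of these cases into one and sidesteps the parabolic-nontangentiality machinery altogether: because $\psi_j^{-1}(\eta_j)$ always lies on $\{z_n=0\}$ with $P$-value exactly $P(\eta_j')/(1-|a_j|^2)$, the condition $\beta<1$ already forces an interior subsequential limit, so Corollary~\ref{key-prop} applies directly without any appeal to Theorem~\ref{maintheorem2} or Lemma~\ref{non-parabol}. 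Your verification that $\psi_j^{-1}(\Omega)\to D_P$ via the uniform contraction $\psi_j(K)\to(0',1)$ is also more self-contained than the paper's citation of \cite{Ber94,DN09}, and your proof that $D_P\cong\mathbb B^n$ (normalize by $\mathrm{Aut}(\mathbb B^n)$ and reapply Corollary~\ref{key-prop} to the exhaustion $\psi_j^{-1}(\Omega)\to D_P$) is cleaner than the paper's terse Wong--Rosay sketch. The paper's layout has the expository virtue of exhibiting the corollary as a direct application of its two main theorems; yours buys economy and independence from the $\alpha$-analysis.
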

\begin{proof} For simplicity, let us denote by $a_j=\eta_{jn}$ and by $x_j:=1-\mathrm{Re}(a_j)>0, y_j:=\mathrm{Im}(a_j)$ for $j\geq 1$. Then we also consider the sequence of automorphisms $\{\psi_j\}\subset \mathrm{Aut}(D_P)$, given by
$$  \displaystyle    
	\psi_j(z)=\left(\frac{(1-|a_j|^2)^{1/2m_1}}{(1-\bar{a}_j z_n)^{1/m_1}} z_1,\ldots,  \frac{(1-|a_j|^2)^{1/2m_{n-1}}}{(1-\bar{a}_jz_n)^{1/m_{n-1}}} z_{n-1}, \dfrac{z_n+a_j}{1+\bar a_j z_n} \right), \; j\geq 1.
$$
 Since $\psi_j(0',0)=(0',a_j)\to (0',1)$ as $j\to\infty$ and the boundary point $(0',1)$ is of D'Angelo finite type, again by \cite[Proposition 2.1]{Ber94} or \cite[Proposition 2.2]{DN09} it follows that
	\begin{equation*}
		\begin{split}
			\lim\limits_{j\to\infty}\psi_j^{-1}(\Omega) =\lim\limits_{j\to\infty}\psi_j^{-1}(\Omega\cap U)=\lim\limits_{j\to\infty}\psi_j^{-1}(D_P\cap U)= D_P.
		\end{split}
	\end{equation*}

Now we consider the limit of $\{\psi_j^{-1}(\eta_j)\}$ as $j\to\infty$ in terms of the boundary behavior of $\{a_j\}$ tending to $1\in \partial \Delta$. Indeed, 	we first assume that $\{\eta_j\}\subset D^s_P$ for some $0<s<1$. Then as in Remark \ref{re-3.1}, one sees that
$$
|a_j-1|^2<-2s\mathrm{Re}(a_j-1), \; \text{ for } j\geq 1,
$$
or equivalently $ x_j^2+y_j^2<2s x_j, \; \text{ for } j\geq 1$. Therefore, after taking some subsequence we may assume that there exists 
$$
\alpha:=\lim\limits_{j\to\infty}\frac{y_j^2}{x_j}\leq 2s<2.
$$

Thus we consider the following three cases:

\noindent
{\bf Case 1.} $\alpha=0$ and $\{\eta_j\}\subset D_P^{s,r}$ for some $r\in (0,1)$.  Then by Theorem \ref{maintheorem2}, one has $\Omega$ is biholomorphically equivalent to $D_P$. 

\noindent
{\bf Case 2.} $\alpha=0$ and $\{\eta_j\}$ converges $\Lambda$-tangentially to $(0',1)$.  Then by Theorem \ref{maintheorem10}, one has $\Omega$ is biholomorphically equivalent to $\mathbb B^n$. 

\noindent
{\bf Case 3.} $0<\alpha<2$. Then by Lemma \ref{non-parabol}, we have 
\begin{equation*}
		\begin{split}
			\lim\limits_{j\to\infty}\psi_j^{-1}(D_P^s)= D_{P}^s(\alpha), 
		\end{split}
	\end{equation*}
	where \begin{align*}
\displaystyle D_P^s(\alpha)&=\Big\{z\in \mathbb C^n\colon \left|z_n+\frac{b\alpha}{(1-b)(2-\alpha)+b\alpha}\right|^2+ \frac{(1-b)(2-\alpha)}{(1-b)(2-\alpha)+b\alpha}P(z')  \\
		& \hskip2.5cm <\frac{2-\alpha -2b}{(1-b)(2-\alpha)+b\alpha}+\left|\frac{b\alpha}{(1-b)(2-\alpha)+b\alpha}\right|^2\Big\},
\end{align*}
where $b=1-s$. 

Furthermore, we observe that $\psi_j^{-1}(\eta_j)=\Big(\dfrac{\eta_{j1}}{\lambda_j^{1/2m_1}},\ldots, \dfrac{\eta_{j(n-1)}}{\lambda_j^{1/2m_{n-1}}} ,0\Big)\in D_P^s(\alpha)\cap \{z_n=0\}$, where $\lambda_j=1-|a_j|^2$ for all $j\geq 1$, and 
$$
D_P^s(\alpha)\cap \{z_n=0\}=\{(z',0)\in \mathbb C^n\colon P(z')<\frac{2-\alpha-2b }{2-\alpha -2b+b\alpha}<1 \}\Subset D_P\cap \{z_n=0\}.
$$
Hence, by passing to a subsequence if necessary, we may assume that $\psi_j^{-1}(\eta_j)$ converges to some point $p\in D_P$. Therefore, by Corollary \ref{key-prop} we conclude that $\psi_j^{-1}\circ f_j^{-1}$ converges uniformly on compacta to a biholomorphic map $F\colon \Omega\to D_P$.

Finally, the remaining possibility is that  $\{\eta_j\}\not\subset D^s_P$ for any $s\in (0,1)$. By passing to a subsequence if necessary, we may assume that $\{\eta_j\}$ converges $\Lambda$-tangentially to $(0',1)$.  Then again by Theorem \ref{maintheorem10}, one concludes that $\Omega$ is biholomorphically equivalent to $\mathbb B^n$.

We note that in the case that $\Omega$ is biholomorphically equivalent to $\mathbb B^n$, we may assume that $\Omega=\mathbb B^n$. Then, since $\mathbb B^n\cap U=\Omega\cap U=D_P\cap U$, it follows that $(0',1)$ is a strongly pseudoconvex point and thus the Wong-Rosay theorem shows that $D_P$ is biholomorphically equivalent to $\mathbb B^n$.

Altogether, the proof is complete.
 \end{proof}

\section{Spherically extreme boundary points}\label{S5}
We start with the following result which is the main step of the proof of Theorem \ref{togetmodelstronglypsc}.
\begin{proposition}\label{spherical-strongly-psc}
	Let $\Omega \subset \mathbb R^N$ be a domain with $\mathcal C^2$ smooth boundary near some point $p \in \partial \Omega$. Suppose that $p$ is a locally spherically extreme point. Then, $\Omega$ is strongly convex at $p$. 
\end{proposition}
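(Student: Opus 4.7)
The plan is to translate the geometric containment into a pointwise inequality between two graph functions and then extract a Hessian comparison.

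First I would choose Euclidean coordinates on $\mathbb R^N$ so that $p=0$ and the inward unit normal to $\partial\Omega$ at $p$ is $e_N$. Because $p\in\partial\Omega\cap\partial\mathbb B(c(p);R)$ while $\Omega\cap U\subset \mathbb B(c(p);R)$, the two hypersurfaces must share the tangent hyperplane $\{x_N=0\}$ at $p$, and the center of the ball is forced to lie in the inward normal direction, namely $c(p)=Re_N$. After shrinking $U$, both $\partial\Omega\cap U$ and $\partial\mathbb B(c(p);R)\cap U$ are graphs over the $x'=(x_1,\ldots,x_{N-1})$ hyperplane: write them as $x_N=\phi(x')$ and $x_N=\psi(x')$ respectively, where
\[
\psi(x')=R-\sqrt{R^2-|x'|^2}.
\]
By construction $\phi(0)=\psi(0)=0$ and $\nabla\phi(0)=\nabla\psi(0)=0$.

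Next I would note that, with these orientations, $\Omega\cap U=\{x_N>\phi(x')\}\cap U$ and $\mathbb B(c(p);R)\cap U=\{x_N>\psi(x')\}\cap U$, provided $U$ is taken small enough that $x_N<R$ throughout. The inclusion $\Omega\cap U\subset \mathbb B(c(p);R)$ then forces $\phi(x')\geq \psi(x')$ for every $x'$ near $0$. Since $\phi-\psi$ is $\mathcal C^2$, vanishes at $0$ together with its gradient, and attains a local minimum there, the standard Hessian criterion gives
\[
\mathrm{Hess}\,\phi(0)\,-\,\mathrm{Hess}\,\psi(0)\,\geq\,0.
\]

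Finally, an elementary computation on $\psi(x')=R-\sqrt{R^2-|x'|^2}$ yields $\mathrm{Hess}\,\psi(0)=\tfrac{1}{R}I$, so $\mathrm{Hess}\,\phi(0)\geq\tfrac{1}{R}I>0$. In the chosen coordinates the second fundamental form of $\partial\Omega$ at $p$ is exactly $\mathrm{Hess}\,\phi(0)$, and strict positive definiteness is precisely strong convexity of $\Omega$ at $p$. No serious obstacle is anticipated: the only delicate points are arranging simultaneous graph representations for $\partial\Omega$ and $\partial\mathbb B(c(p);R)$ near $p$, and keeping track of orientations so that the containment translates to the inequality $\phi\geq\psi$ rather than its reverse.
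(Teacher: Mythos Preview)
Your argument is correct. Both you and the paper reduce to a Hessian inequality for a graph function, but the set-ups differ. You place the origin at $p$ with the inward normal along $e_N$, write $\partial\Omega$ and the sphere as graphs $x_N=\phi(x')$ and $x_N=\psi(x')=R-\sqrt{R^2-|x'|^2}$, and read off $\mathrm{Hess}\,\phi(0)\ge \tfrac{1}{R}I$ directly from the pointwise inequality $\phi\ge\psi$. The paper instead translates so that the center $c(p)$ is the origin, writes $\partial\Omega$ as a graph $x_N=\varphi(x')$, observes that the squared-norm function $\|x'\|^2+\varphi(x')^2$ attains a local maximum at $p'$, and then argues by contradiction: assuming a tangent direction with nonpositive boundary Hessian, the second-order condition at the maximum forces that direction to vanish. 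Your route is more direct and yields the quantitative bound $\mathrm{Hess}\,\phi(0)\ge \tfrac{1}{R}I$ essentially for free; the paper's route avoids having to identify the inward normal with the direction toward $c(p)$ and writing the sphere as a second graph, at the cost of a contradiction argument and a slightly longer computation.
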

\begin{proof}
We write $p=(p',p_N).$
By the assumption there is a point $c(p) \in \Omega$ and $r>0$ such that $B(c(p), r) \subset \Omega$
and $\partial B(c(p),r) \cap \partial \Omega=\{p\}.$
After a translation we may assume $c(p)$ is the origin, 
then by a permutation of coordinates and the implicit function theorem we may assume that
near $p, \Omega$ admits a local defining function $\rho$ taking the form
\begin{equation} \label{eq1}
 \rho(x) :=x_N-\varphi (x'), \ x'=(x_1, \cdots, x_{N-1})
\end{equation} 
where $\varphi$ is $\mathcal C^2$ smooth defined on a small neighbourhood $U$ of $p'=(p_1, \cdots, p_{N-1}) \in  \mathbb R^{N-1}$. 
Furthermore, after applying a reflection if necessary we may achieve that $p_N \ge 0$
Then for $x=(x', x_N) \in \partial \Omega$ with $x' \in U$ we have
\begin{equation} \label{eq11}
\Vert x \Vert^2=\psi (x'):= \Vert x'\Vert^2+\varphi (x')^2.
\end{equation}
Thus, by the assumption that $p$ is a locally spherical extreme point of $\partial \Omega,$ 
we see that the function $\psi$
attains its local maximum at the point $p'$. Hence
\begin{equation} \label{eq0}
\nabla \psi|_{(p')}=0.
\end{equation}
Now we suppose that $\partial \Omega$ is {\it not} strictly convex at $p.$ Then we can find a 
tangent vector $\lambda =(\lambda_1, \cdots, \lambda_N) \in T_p (\partial \Omega), \lambda \ne 0$ such that
$$\text{Hess}\ (\rho)(p,\lambda):=
\sum_{1 \le j, k \le N} \frac{\partial^2 \rho}{\partial x_j \partial {x_k}} (p) \lambda_j {\lambda_k} \le 0.$$
By (\ref{eq1}) we obtain
$$\text{Hess}\ (\varphi)(p',\lambda'):= \sum_{1 \le j, k \le N-1} \frac{\partial^2 \varphi}{\partial x_j \partial {x_k}} (p') \lambda_j  {\lambda_k}\ge 0.$$
Now we wish to exploit further the assumption that $\psi$ attains its local maximum at the point $(p').$
Then for all $t \in \mathbb R$ small enough we have 
\begin{equation} \label{eq2}
\psi (p') \ge \psi (p'+t\lambda'), \ \lambda'=(\lambda_1, \cdots, \lambda_{N-1}).
\end{equation}
Using Taylor expansion theorem on the right hand side while taking into account (\ref{eq0}) we get
\begin{equation} \label{eq3}
\psi (p'+t\lambda')=\psi (p')+ \text{Hess}(\psi)(p',\lambda') t^2 +o(t^2),
\end{equation}
where 
$$\text{Hess}(\psi)(p',\lambda') = \sum_{1 \le j, k \le N-1}\frac{\partial^2 \psi}{\partial x_j \partial {x_k}} (p') \lambda_j {\lambda_k}.$$
Putting (\ref{eq2}) and (\ref{eq3}) together we see that 
$$\text{Hess}(\psi)(p',\lambda') \le 0.$$
By direct computation, using (\ref{eq11}) and the fact that $\varphi (p')=p_N,$ we arrive at 
\begin{equation} \label{eq4}
\sum_{j=1}^{N-1} \lambda_j ^2+2p_N \text{Hess}\ (\varphi)(p',\lambda')+2\Big ( \sum_{j=1}^{N-1}
\frac{\partial \varphi}{\partial x_j} (p')\lambda_j \Big )^2 \le 0.
\end{equation}
Next, we look at the right most term on the left hand side. Since 
$(\lambda_1, \cdots, \lambda_N) \in T_p (\partial \Omega)$ we have
$$\sum_{j=1}^N 
\frac{\partial \rho}{\partial x_j} (p) \lambda_j=0.$$
In view of (\ref{eq1}) the above equation takes the form 
$$\sum_{j=1}^{N-1} 
\frac{\partial \varphi}{\partial x_j} (p)\lambda_j =\lambda_n.$$
Plugging this equation into (\ref{eq4}) we obtain 
$$\sum_{j=1}^{N-1} \lambda_j^2+2p_n \text{Hess}\ (\varphi)(p',\lambda')+2 \lambda_N^2 \le 0.$$
Since $p_N \ge 0$ and since $\text{Hess}\ (\varphi)(p',\lambda') \le 0$
we infer that $\lambda_1=\cdots=\lambda_N=0,$ which is absurd. So we are done.
\end{proof}

\begin{proof}[Proof of Theorem \ref{togetmodelstronglypsc}]
By applying Theorem \ref{togetmodelstronglypsc1} and Proposition \ref{spherical-strongly-psc}, we obtain the desired conclusion.
\end{proof}
\begin{remark}
Let $\Omega$ be a domain in $\mathbb C^n$ contained in a ball $U$ of finite radius such that $\partial \Omega$ touches $\partial U$ and $\partial \Omega$ is $\mathcal{C}^2$-smooth in a neighborhood of $p$, i.e., $p$ is a spherical extreme boundary point. Then, B. L. Fridman and D. Ma \cite[Theorem $1.3$]{FM95} pointed that $\Omega$ can exhaust the unit ball $\mathbb B^n$. To prove this, let 
$$
Q_k=\{z\in \mathbb C^n\colon 2\;\mathrm{Re}(z_n)+ |z_n|^2+k |z'|^2<0\}, k\in \mathbb N.
$$
Then $Q_1$ is the unit ball centered at $(0', -1)$, $Q_k$ is biholomorphically equivalent to the unit ball $\mathbb B^n$ for any $k\in \mathbb N$. Moreover, by following the proof of \cite[Theorem $1.3$]{FM95}, there exists $N\geq 4$ and there exists a biholomorphic image $\Omega_1\subset Q_1$ of $\Omega$ that has a defining function of the form
$$
\rho_1(z)=2\; \mathrm{Re}(z_n)+ |z_n|^2+N |z'|^2+o(|z|^2).
$$ 

We now consider the family $f_\epsilon\in \mathrm{Aut}(Q_1),  \; \epsilon>0$, given by
\[
\begin{cases}
	w_n&=\dfrac{\epsilon z_n}{2-\epsilon+(1-\epsilon)z_n}\\
	w'&=\dfrac{\sqrt{\epsilon(2-\epsilon)}}{2-\epsilon+(1-\epsilon)z_n}z'.
\end{cases}
\]
Then, one sees that $f_\epsilon(0',-1)=(0',-\epsilon)$ converges nontangentially to $\xi_0:=(0',0)$ as $\epsilon\to 0^+$ and $f_\epsilon^{-1}(\Omega_1)\to Q_N$ as $\epsilon\to 0^+$. Therefore, $\Omega_1$ exhausts $Q_N$, and hence $\Omega$ exhausts the unit ball $\mathbb B^n$. We would like to emphasize here that Theorem \ref{togetmodelstronglypsc} ensures that any Kobayashi hyperbolic complex manifold $M$ exhausted by $\Omega$ is biholomorphically equivalent to $\mathbb B^n$, even the sequence $\{\eta_j\}$, given in the statement of Theorem \ref{togetmodelstronglypsc}, converges tangentially to $\xi_0$.
\end{remark}
Notice that if $p\in \partial \Omega$  is a spherically extreme boundary point, then $\lim\limits_{\Omega \ni z\to p}\sigma_{\Omega}(z)=1$ 
(see \cite[Theorem $3.1$]{KZ16}). Hence, the above corollary easily follows from the following corollaries.
\begin{corollary}\label{hhr} Let $M$ be an $n$-dimensional Kobayashi hyperbolic complex manifold and let $\{\Omega_j\}$ be a sequence of domains in $\mathbb{C}^{n}$. Suppose that $M$ can be exhausted by $\{\Omega_j \}$ via an exhausting sequence $\{f_j: \Omega_j \to M_j\subset M\}$. Suppose also that there exists a point $a \in M$ such that  
$$      
\lim\limits_{j\to \infty}\sigma_{\Omega_j}(\eta_j)=1,
$$ 
where $\eta_j := f_j^{-1}(a)$ for all $j\geq 1$. Then, $M$ is biholomorphically equivalent to the unit ball $\mathbb{B}^{n}$.
\end{corollary}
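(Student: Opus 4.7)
The plan is to realize the hypothesis on the squeezing function as a convergence $\lim \widetilde{\Omega}_j=\mathbb B^n$ of the images of the $\Omega_j$ under nearly-optimal squeezing embeddings, and then apply Corollary~\ref{key-prop} directly with target $\mathbb B^n$, which is taut.

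First, for each $j\geq 1$, by the definition of the squeezing function and the fact that $\sigma_{\Omega_j}(\eta_j)\to 1$, I would choose a holomorphic embedding $g_j\colon \Omega_j\to \mathbb B^n$ such that $g_j(\eta_j)=0$ and $\mathbb B^n(0;r_j)\subset g_j(\Omega_j)\subset \mathbb B^n$ with $r_j\to 1$ (e.g.\ $r_j:=\sigma_{\Omega_j}(\eta_j)-1/j$). Set $\widetilde{\Omega}_j:=g_j(\Omega_j)$ and define
\[
F_j:=g_j\circ f_j^{-1}\colon M_j\longrightarrow \widetilde{\Omega}_j,
\]
which is a biholomorphism satisfying $F_j(a)=g_j(\eta_j)=0$.

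The next step is to verify the hypotheses of Corollary~\ref{key-prop} with $\Omega=\mathbb B^n$ (which is bounded and hence taut). From the exhaustion hypothesis we already have $\lim M_j=M$. Since $\mathbb B^n(0;r_j)\subset \widetilde{\Omega}_j\subset \mathbb B^n$ with $r_j\to 1^{-}$, a direct check of the two conditions in Definition~\ref{Def1} gives $\lim \widetilde{\Omega}_j=\mathbb B^n$: any compact $K\subset \mathbb B^n$ lies in some $\mathbb B^n(0;r)$ with $r<1$, hence in $\widetilde{\Omega}_j$ for all large $j$; conversely any compact set eventually contained in $\widetilde{\Omega}_j\subset \mathbb B^n$ is automatically in $\mathbb B^n$. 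The family $\{F_j\}$ takes values in the bounded set $\mathbb B^n$, so by Montel's theorem it is normal. Finally, $F_j(a)=0\to 0\in \mathbb B^n$.

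All the assumptions of Corollary~\ref{key-prop} are thereby met, and it yields a subsequence of $\{F_j\}$ that converges uniformly on compact subsets of $M$ to a biholomorphism $F\colon M\to \mathbb B^n$, which is exactly the conclusion. I do not anticipate a substantive obstacle: the entire content is to translate the squeezing hypothesis into convergence of the squeezed images to the ball, after which Corollary~\ref{key-prop} does all the work. The mildest subtlety is the verification of $\lim \widetilde{\Omega}_j=\mathbb B^n$, but this is immediate from $r_j\to 1$ and $\widetilde{\Omega}_j\subset \mathbb B^n$.
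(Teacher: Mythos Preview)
Your proof is correct and follows essentially the same approach as the paper: choose near-optimal squeezing embeddings $g_j\colon\Omega_j\to\mathbb B^n$ with $g_j(\eta_j)=0$, observe that $g_j(\Omega_j)\to\mathbb B^n$, and apply Corollary~\ref{key-prop} to the biholomorphisms $g_j\circ f_j^{-1}$ after invoking Montel's theorem for normality. The paper's argument is identical in structure, though it is slightly terser about why $G_j(\Omega_j)$ exhausts $\mathbb B^n$, which you spell out via Definition~\ref{Def1}.
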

\begin{proof}
By the assumption on the sequence $\{\eta_j\}$, there exists a sequence of injective holomorphic maps 
$G_j: \Omega_j \to \mathbb{B}^{n}$ such that $G_j (\eta_j)=0$ and 
$G_j (\Omega_j)$ exhausts $\mathbb{B}^{n}.$ Thus the sequence 
$$
\tilde G_j:=G_j \circ f_j^{-1}: M_j \to G_j (\Omega_j)
$$
satisfies $\tilde G_j (a)=0$ for all $j\geq 1$. By Montel theorem, the sequence is also normal. Thus, we may apply Corollary \ref{key-prop} to complete the proof.
\end{proof}

By Corollary \ref{hhr}, one obtains the following corollary.
\begin{corollary}\label{hhr1} Let $M$ be an $n$-dimensional Kobayashi hyperbolic complex manifold and let $\Omega$ be a domain in $\mathbb{C}^{n}$. Suppose that $M$ can be exhausted by $\Omega$ via an exhausting sequence $\{f_j: \Omega \to M_j\subset M\}$. Assume that there exists a point $a \in M$ such that the sequence $\eta_j := f_j^{-1}(a)$ converges to $\xi_0 \in \partial \Omega$ and 
	$$      
	\lim\limits_{q\to \xi_0}\sigma_{\Omega}(q)=1.$$ 
	Then, $M$ is biholomorphically equivalent to the unit ball $\mathbb{B}^{n}$.
\end{corollary}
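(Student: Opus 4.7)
The plan is to reduce Corollary \ref{hhr1} to a direct instance of Corollary \ref{hhr}. The key observation is that when a single domain $\Omega$ does the exhausting, we can simply take the constant sequence $\Omega_j := \Omega$ for every $j \geq 1$. With this choice, the hypotheses of Corollary \ref{hhr1} translate almost verbatim into the hypotheses of Corollary \ref{hhr}, so the work is entirely in checking that the squeezing condition at a single boundary point implies the sequential squeezing condition along the orbit $\{\eta_j\}$.

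First I would set $\Omega_j := \Omega$ and $M_j \subset M$ as in the statement, so that $\{f_j \colon \Omega_j \to M_j\}$ is already an exhausting sequence in the sense of the paper's setup. Next I would use the hypothesis $\eta_j = f_j^{-1}(a) \to \xi_0 \in \partial\Omega$ together with the local limit $\lim_{q \to \xi_0} \sigma_\Omega(q) = 1$ to conclude that
\[
\lim_{j \to \infty} \sigma_{\Omega_j}(\eta_j) \;=\; \lim_{j \to \infty} \sigma_\Omega(\eta_j) \;=\; 1.
\]
This is the only genuine step: it is immediate from the definition of a limit at a boundary point applied along the sequence $\eta_j$, since $\eta_j$ lies in $\Omega$ and converges to $\xi_0$.

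Once this sequential squeezing limit is established, Corollary \ref{hhr} applies verbatim with the data $(M, \{\Omega_j\}, \{f_j\}, a)$ and yields that $M$ is biholomorphically equivalent to $\mathbb{B}^n$, as desired. There is no real obstacle here: the whole content has been bundled into Corollary \ref{hhr}, whose proof constructs injective holomorphic maps $G_j \colon \Omega_j \to \mathbb{B}^n$ with $G_j(\eta_j) = 0$ and $G_j(\Omega_j)$ exhausting $\mathbb{B}^n$, and then uses Montel's theorem together with Corollary \ref{key-prop} to extract a biholomorphism $M \to \mathbb{B}^n$. Thus Corollary \ref{hhr1} is essentially a specialization of Corollary \ref{hhr} to the case of a constant exhausting family, and the proof should be a one-line deduction.
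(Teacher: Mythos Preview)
Your proposal is correct and matches the paper's approach exactly: the paper simply states that Corollary~\ref{hhr1} follows from Corollary~\ref{hhr}, and your reduction via the constant sequence $\Omega_j:=\Omega$ together with $\sigma_{\Omega}(\eta_j)\to 1$ is precisely the intended one-line deduction.
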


\begin{acknowledgement}Part of this work was done while the authors were visiting the Vietnam Institute for Advanced Study in Mathematics (VIASM). We would like to thank the VIASM for financial support and hospitality. The first author was supported by the Vietnam National Foundation for Science and Technology Development (NAFOSTED) under grant number 101.02-2021.42. The third author was supported by the Vietnam National Foundation for Science and Technology Development (NAFOSTED) under grant number 101.02-2019.304. We are grateful to an anonymous referee for his/her very careful reading and constructive comments that significantly improve our exposition.
\end{acknowledgement}
\subsection*{Data availability} Data sharing not applicable to this article as no data sets were generated or analyzed during the current study.

\bibliographystyle{plain}

\end{document}